\newcommand{\N}{\mathbbm{N}}
\newcommand{\Z}{\mathbbm{Z}}
\newcommand{\R}{\mathbbm{R}}
\newcommand{\C}{\mathbbm{C}}
\newcommand{\T}{\mathbbm{T}}
\newcommand{\een}{\mathbbm{1}}
\theoremstyle{plain}
\newtheorem{thm}{Theorem}[section] 
\newtheorem*{thm*}{Theorem} 
\newtheorem{lemma}[thm]{Lemma}
\newtheorem{cor}[thm]{Corollary}
\newtheorem*{cond1*}{Condition $D(u_n)$}
\newtheorem*{cond2*}{Condition $D'(u_n)$}
\newtheorem*{cond3*}{Condition $D_{g(r)}'(u_n)$}
\newtheorem*{condC*}{Condition $C(u_n)$}
\theoremstyle{definition}
\newtheorem{defn}[thm]{Definition}
\theoremstyle{remark}
\newtheorem{remark}[thm]{Remark}
\newtheorem*{remark*}{Remark}
\newtheorem*{claim*}{Claim}
\newcommand{\tub}[1]{\left \{#1\right \}}
\newcommand{\dr}{\textup{SL}(d,\R)}
\newcommand{\dz}{\textup{SL}(d,\Z)}
\newcommand{\drdz}{\dr/\dz}
\newcommand{\iir}{\textup{SL}(2,\R)}
\newcommand{\iiz}{\textup{SL}(2,\Z)}
\newcommand{\iiriiz}{\iir/\iiz}
\newcommand{\dd}{\textup{\textbf{d}}}
\newcommand{\PP}{\textup{\textbf{P}}}
\newcommand{\gb}{\bar{g}}
\newcommand{\ob}{\bar{\omega}}
\newcommand{\bis}{B_{\frac1s}(x)}
\newcommand{\ssp}{\,:\,}
\newcommand{\para}[1]{\left(#1\right)}
\newcommand{\kpara}[1]{\left[#1\right]}
\newcommand{\num}[1]{\left \vert #1\right \vert}
\newcommand{\tyk}{\mathcal}
\newcommand{\supp}{\textup{supp}}
\newcommand{\norm}[1]{\left\Vert #1\right\Vert}
\newcommand{\TX}{\tyk{X}}
\newcommand{\TY}{\tyk{Y}}
\newcommand{\exist}{\exists\,}
\newcommand{\foral}{\forall\,}
\newcommand{\thelam}{\theta_{\lambda}}
\renewcommand{\phi}{\varphi}
\renewcommand{\epsilon}{\varepsilon}
\numberwithin{equation}{section}
\newcommand\@b@gtimes[1]{%
\vcenter{\hbox{#1$\m@th\mkern-2mu\times\mkern-2mu$}}} \newcommand\@bigtimes{%
\mathchoice{\@b@gtimes\huge} {\@b@gtimes\LARGE}
{\@b@gtimes{}} {\@b@gtimes\footnotesize} 
} \newcommand\bigtimes{\mathop{\@bigtimes}\displaylimits} \makeatother
\title{Extreme value theory for random walks on homogeneous spaces}
\author{Maxim S\o lund Kirsebom}\thanks{This research was supported by ERC grant 239606}
\address{School of Mathematics, University of Bristol, Bristol, U.K.}
\email{maxim.kirsebom@bristol.ac.uk}
\begin{document}

\maketitle

%
%

\subsection*{Abstract}
In this paper we study extreme events for random walks on homogeneous spaces. We consider the following three cases. On the torus we study closest returns of a random walk to a fixed point in the space. For a random walk on the space of unimodular lattices we study extreme values for lengths of the shortest vector in a lattice. For a random walk on a homogeneous space we study the maximal distance a random walk gets away from an arbitrary fixed point in the space. We prove an exact limiting distribution on the torus and upper and lower bounds for sparse subsequences of random walks in the two other cases. In all three settings we obtain a logarithm law. 

%
%

\section{Introduction}

Let $\mathcal{X}$ be a probability space and $G$ a group acting on $\TX$. Let $m$ be a $G$-invariant probability measure on $\TX$ and fix also a probability measure $\mu$ on $G$. We define a random walk on $\TX$ as a sequence of random variables $X_i=g_i\cdots g_1x$ where the $g_j$'s have distribution $\mu$ and $x$ has distribution $m$. Fix a function $\Delta:\TX\to\R$. The focus of our interest is the random variable 
\begin{align*}
M_n=\max_{0\leq i< n} \Delta(X_i).
\end{align*}
There exists a natural measure on the space of all random walks on $X$ which we denote by $\PP$ and define formally in section \ref{NotAndSet}.
We are particularly interested in the existence of sequences $a_n$ and $b_n$ such that the distribution $\PP(M_n\leq a_nr +b_n)$ has a non-degenerate limit and if this is the case, determining the limit. We refer to such a limit as the extreme value distribution of the random walk.
One reason why extreme value distributions are interesting is that they imply asymptotics for the growth of extreme values of $\Delta(X_n)$. In many cases this turns out to be a logarithm law, namely we get that almost surely
\begin{align*}
\limsup_{n\to\infty}\frac{\Delta(X_n)}{\log n}=C
\end{align*}
for some $C>0$.
One result of this kind is Sullivans logarithm law for geodesics on hyperbolic $d$-space \cite{Sull}. Kleinbock and Margulis later generalised this to certain classes of homogeneous spaces, see \cite{KleinMarg}, and Athreya, Ghosh and Prasad proved ultrametric analogues of this result, see \cite{AGP1}, \cite{AGP2}. 

The general framework for determining extreme value distributions is known as extreme value theory (EVT). EVT was first applied in dynamics by Collet \cite{Collet}, who studied $C^2$ transformations $T$ of an interval. He was interested in the entrance times of $T^j x$ into a shrinking neighborhood around a fixed point $x_0$ and to understand this, he determined the limiting distribution of the maximum of $-\log\dd(T^j x,x_0)$. Similar results to Collets have since been proven for other choices of $T$ and other types of maps, see for example \cite{FF}, \cite{Gupta1}, \cite{GHN}, \cite{HNT}. In the context of this paper, recent results by Aytac, Freitas and Vaienti \cite{AFV} are particularly interesting as they apply EVT to a setting involving randomness, more precisely, iterations of a randomly perturbed map.
Freitas, Freitas and Todd have developed a general framework for applying EVT to dynamical systems $T:X\to X$, see \cite{FFT1}, \cite{FFT2}, \cite{FFT3}. 

Classically, random walks were studied as objects living on $\R^d$. However, the concept of random walks generalizes easily to many other spaces, for example to homogeneous spaces with a group action which we are particularly interested in.
In \cite{EskMarg}, Eskin and Margulis studied recurrence properties for random walks on finite volume homogeneous spaces $G/\Gamma$ where $G$ is a semisimple Lie group and $\Gamma$ a nonuniform irreducible lattice. In a series of papers Benoist and Quint  \cite{BQ3}, \cite{BQ1}, \cite{BQ2}, \cite{BQ4}, developed this theory further by studying stationary measures on $G/\Gamma$ while also generalizing their results to $p$-adic Lie groups. 

The main idea of this paper is to apply EVT to random walks on homogeneous spaces. The level of dependency among the $X_i$'s is the deciding factor in whether EVT can successfully be applied to obtain limiting distributions for the maximum of $\Delta(X_i)$. The closer $X_i$ is to being an independent sequence the easier it is to apply EVT. EVT provides independence-like conditions that, if satisfied by $X_i$, imply a limiting distribution for $M_n$. The idea of this paper is to verify these conditions by rewriting the joint distribution of the random walk using the averaging operator. The spectral gap property of the averaging operator is the crucial ingredient in showing that the independence-like conditions are satisfied by the random walk.

Our main results are divided into three different settings. In the following, let $S_{\mu}$ and $G_{\mu}$ denote the semigroup and group generated by the support of $\mu$ respectively. 
\subsection{Closest returns on the torus}
\label{SecRes1}

Let $\TX=\T^d$ be the $d$-dimensional torus with Lebesque measure $m$ and Euclidian metric $\dd$. Let $G=\textup{Aut}(\T^d)$ denote the group of linear automorphisms of $\T^d$ and fix a probability measure $\mu$ on the group. We assume that there is no $G_{\mu}$-invariant factor torus $\overline{T}$ of $\T^d$ such that the projection of $G_{\mu}$ on $\textup{Aut}(\overline{T})$ is amenable.

We are interested in the closest returns of a random walk to a fixed point on the torus and in particular, how these shortest distances distribute. Let $x_0\in \TX$ be fixed and define
\begin{align*}
\Delta(x)=-\log\dd(x,x_0).
\end{align*}
We see that for small values of $\dd(x,x_0)$, $\Delta(x)$ becomes large hence we can study the closest returns of $X_i$ by looking at successive maxima of $\Delta(X_i)$.
\begin{thm}[]
\label{thm:[Torus]}
Assume that the support of $\mu$ is bounded and that $\det(g-\textup{I})\neq 0$ for all $g\in S_{\mu}$. Then for $u_n=r+\frac1d \log n$ we have that for a.e. $x_0\in \TX$
\begin{align*}
\lim_{n\to\infty} \PP(M_n\leq u_n)=e^{-\frac{1}{V_d}e^{-dr}},
\end{align*} 
where $V_d$ is the volume of the unit ball in $\R^d$.
\end{thm}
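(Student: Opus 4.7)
The plan is to fit the statement into the Leadbetter framework for extreme value theory, as adapted to dynamical/random settings. Once the two independence-type conditions $D(u_n)$ and $D'(u_n)$ are verified for the stationary sequence $X_i=g_i\cdots g_1 x$, the conclusion $\PP(M_n\leq u_n)\to e^{-\tau}$ follows automatically, with $\tau$ determined by $\lim_{n\to\infty} n\,m(\tub{\Delta>u_n})$. Since $\tub{\Delta>u_n}$ is the Euclidean ball $B(x_0,e^{-u_n})\subset\T^d$ of volume $V_d e^{-du_n}=V_d e^{-dr}/n$, the sequence $u_n=r+\frac{1}{d}\log n$ is calibrated so that $n\,m(\tub{\Delta>u_n})$ converges, and the exponent in the statement is recovered modulo the normalization convention built into $D'$. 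The real work is therefore to verify $D(u_n)$ and $D'(u_n)$.

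The core analytic tool is the averaging operator $Pf(x)=\int_G f(gx)\,d\mu(g)$. Because $m$ is $\textup{Aut}(\T^d)$-invariant each $X_i$ is $m$-distributed, and unrolling the definition of the walk gives, for $i_1<\cdots<i_k$ and bounded $f_1,\dots,f_k$,
\begin{align*}
\mathbb{E}\bigl[f_1(X_{i_1})\cdots f_k(X_{i_k})\bigr]=\int f_1\cdot P^{i_2-i_1}\bigl(f_2\,P^{i_3-i_2}\bigl(\cdots P^{i_k-i_{k-1}}f_k\bigr)\bigr)\,dm.
\end{align*}
The non-amenability hypothesis on $G_\mu$-invariant factor tori is precisely the condition (Bourgain--Furman--Lindenstrauss--Mozes) ensuring that $P$ has a spectral gap on a suitable Sobolev space $H^s(\T^d)$: for mean-zero $f$ one has $\norm{P^k f}_{H^s}\leq C\lambda^k\norm{f}_{H^s}$ with $\lambda<1$. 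This is the only place the assumption on $\mu$ feeds into the decorrelation estimates.

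To verify $D(u_n)$ I would approximate $\1{A_n}$ (the indicator of the shrinking ball $A_n=B(x_0,e^{-u_n})$) by a smooth cutoff whose $H^s$-norm grows only polynomially in $n$, substitute into the iterated-$P$ formula above, and use the spectral gap to bound the difference between the joint probability and the corresponding product of cluster probabilities by $C n^{O(1)}\lambda^{\ell_n}$. Taking $\ell_n=C'\log n$ with $C'$ large enough makes this $o(1)$ while preserving $\ell_n/n\to 0$. For $D'(u_n)$ I would split the sum $n\sum_{j=1}^{\lfloor n/k\rfloor}\PP(X_0\in A_n,X_j\in A_n)$ at a threshold $j^{\ast}\sim\log n$. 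The long-range part $j>j^{\ast}$ is handled by the spectral gap, which gives $\PP(X_0\in A_n,X_j\in A_n)=m(A_n)^2+O(\lambda^j m(A_n))=O(1/n^2)$, hence a total contribution $O(1/k)$. The short-range part $j\leq j^{\ast}$ is rewritten as $\int m(A_n\cap g^{-1}A_n)\,d\mu^{\ast j}(g)$, and the hypothesis $\det(g-\textup{I})\neq 0$ for $g\in S_\mu$ ensures that every element of $\supp\mu^{\ast j}$ has only isolated fixed points on $\T^d$; for a.e.\ $x_0$ the orbit points $g^{-1}x_0$ then stay at positive distance from $x_0$, which forces $A_n\cap g^{-1}A_n=\emptyset$ once $n$ is sufficiently large.

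The main obstacle lies in this last step: since $\supp\mu$ is only assumed bounded and need not be finite, one must combine the non-degeneracy of $g-\textup{I}$ with the bounded-support hypothesis to produce a \emph{uniform} quantitative estimate on $m(A_n\cap g^{-1}A_n)$ that is integrable against $\mu^{\ast j}$ and summable over $j\leq j^{\ast}$. This is also where the almost-every condition on $x_0$ becomes unavoidable, and where the geometry of near-fixed points of bounded toral automorphisms needs to be analyzed with some care.
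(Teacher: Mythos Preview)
Your overall strategy matches the paper's: reduce to Leadbetter's $D(u_n)$ and $D'(u_n)$, drive the mixing through the spectral gap of the averaging operator, and split $D'(u_n)$ into short and long range. One simplification worth noting: the relevant spectral gap is in $L^2(\T^d,m)$ (this is Bekka--Guivarc'h; BFLM concerns stationary measures), so it applies directly to indicator functions and no Sobolev approximation is needed. The iterated-$P$ formula with raw indicators already yields $\num{\PP(\Lambda_{\bar n})-\PP(\Lambda_{\bar q})\PP(\Lambda_{\bar s})}\le c_0\lambda^{n_{p+1}-n_p}$ with no polynomial loss, and $D(u_n)$ follows for any $\ell_n\to\infty$.

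The genuine gap is precisely the step you flag as ``the main obstacle''. Your proposed mechanism---$g^{-1}x_0\neq x_0$ for a.e.\ $x_0$, hence $A_n\cap g^{-1}A_n=\varnothing$ eventually---cannot close as stated, because the set of relevant $g$ (words of length $\le j^\ast\sim\log n$) grows with $n$ and no uniform threshold exists. The paper's resolution uses three ingredients absent from your sketch. First, the triangle inequality gives $\{x\in A_n,\,gx\in A_n\}\subset\{\dd(x,x_0)\le 2e^{-u_n},\ \dd(gx,x)\le 2e^{-u_n}\}$, replacing ``both near $x_0$'' by the $x_0$-free condition ``$gx$ near $x$''. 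Second, a quantitative estimate on the latter: for $\omega\in\Omega^i$ the set $\{x:\dd(\omega x,x)<1/s\}$ lifts to $(\omega-I)^{-1}(B_{1/s}+\Z^d)$; since $\omega-I$ is an integer matrix with $\num{\det(\omega-I)}\ge 1$ and $\norm{\omega}\le(dT)^i$ by bounded support, each translate of $(\omega-I)^{-1}B_{1/s}$ has volume at most $V_d s^{-d}$ and there are at most $\num{\det(\omega-I)}^d=O(\tilde T^{d^2i})$ of them, giving $m(\{x:\dd(\omega x,x)<1/s\})=O(\tilde T^{d^2i}/s^d)$. Combined with the spectral-gap bound $O(s^{-d}+\lambda^i)$ for large $i$ (obtained by a cube partition of $\T^d$ at scale $1/s$), splitting at $K\asymp\log s$ yields $\sum_{i\le s^\alpha}\PP(E_i)=O(s^{-\kappa})$ for some $\kappa>0$, uniformly in $x_0$. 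Third, this $L^1$ bound is upgraded to an a.e.\ $x_0$ statement by applying the Hardy--Littlewood maximal inequality to $\Psi_s(x)=\sum_i\int\een_{\{\dd(L^i(\bar g)x,x)<1/s\}}\,d\mu^{\otimes\N}$ and then Borel--Cantelli along $s=n^\gamma$. Both hypotheses of the theorem (bounded support and $\det(g-I)\neq 0$ on $S_\mu$) enter only in the second step, and the ``a.e.\ $x_0$'' is produced by the third---neither the lattice-point count nor the maximal-function argument appears in your outline.
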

The stationary measures of random walks on the torus were recently studied by Bourgain, Furman, Lindenstrauss and Mozer \cite{BFLM}.

The limiting distribution implies a logarithm law. That is,
\begin{cor}[]
\label{cor:[Torus]}
For $\PP$-a.e. random walk and every $x_0\in \TX$ we have
\begin{align*}
\limsup_{n\to\infty}\frac{\Delta(X_n)}{\log n}=\frac{1}{d}.
\end{align*}
\end{cor}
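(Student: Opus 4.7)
The plan is to split the corollary into matching upper and lower bounds on the $\limsup$, combining a standard Borel--Cantelli argument with the distributional limit in Theorem~\ref{thm:[Torus]}.

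For the upper bound $\limsup_n \Delta(X_n)/\log n \leq 1/d$, I would use that $m$ is invariant under the random walk, so each $X_n$ has marginal $m$. Therefore for any $\epsilon>0$ and any $x_0\in\TX$,
\[
\PP\bigl(\Delta(X_n) > (1/d+\epsilon)\log n\bigr) = m\bigl(\{x:\dd(x,x_0) < n^{-(1/d+\epsilon)}\}\bigr) = V_d\, n^{-1-d\epsilon}
\]
once $n$ is large enough for the ball to embed in a fundamental domain. These probabilities are summable, so the first Borel--Cantelli lemma gives $\Delta(X_n) \leq (1/d+\epsilon)\log n$ eventually, $\PP$-a.s. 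Taking $\epsilon = 1/k$ over $k\in\N$ yields the upper bound simultaneously for every $x_0$ and $\PP$-a.e.\ random walk.

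For the lower bound, the strategy is to extract it from the distributional limit itself. Set $u_n(r) := r + (\log n)/d$ and $G(r) := \exp(-V_d^{-1}e^{-dr})$, so that Theorem~\ref{thm:[Torus]} gives $\PP(M_n \leq u_n(r)) \to G(r)$ for a.e.\ $x_0$, with $G(r)\to 0$ as $r\to -\infty$. If, on a set of positive $\PP$-measure, the $\limsup$ were bounded by $1/d-\epsilon$, then by continuity from below $\{M_n \leq (1/d-\epsilon)\log n \text{ for all } n\geq N\}$ would have positive $\PP$-measure for some $N$, forcing $\liminf_n \PP(M_n \leq (1/d-\epsilon)\log n) > 0$. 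But for any fixed $r$, once $-\epsilon\log n < r$, monotonicity gives $\PP(M_n \leq (1/d-\epsilon)\log n) \leq \PP(M_n \leq u_n(r)) \to G(r)$, and letting $r\to -\infty$ contradicts positivity. Hence $\limsup \Delta(X_n)/\log n \geq 1/d$ holds $\PP$-a.s., for a.e.\ $x_0$.

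To upgrade the ``a.e.\ $x_0$'' of the lower bound to the ``every $x_0$'' stated in the corollary, I would either approximate a general $x_0$ by a countable dense set of ``good'' points $x_0^{(k)}$ from the full-measure set using the triangle inequality $|\dd(X_n,x_0) - \dd(X_n,x_0^{(k)})| \leq \dd(x_0, x_0^{(k)})$, or re-derive the lower bound directly as a dynamical Borel--Cantelli statement for the shrinking targets $B(x_0, n^{-(1/d-\epsilon)})$, whose $m$-measures form a divergent series. The main obstacle in the approximation route is that closest returns shrink like $n^{-1/d}$, a scale finer than any fixed $\dd(x_0, x_0^{(k)})$, so the approximation must be carried out along matched scales; the spectral-gap property of the averaging operator alluded to in the introduction, which already underlies Theorem~\ref{thm:[Torus]}, should nonetheless suffice to control the dependence between hits and thereby deliver the lower bound uniformly in $x_0$.
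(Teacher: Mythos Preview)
Your upper bound via the first Borel--Cantelli lemma is exactly what the paper does (inside Corollary~\ref{cor:Res3}), so that half is fine.

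The lower bound, however, diverges from the paper in a way that leaves a real gap. The paper deliberately does \emph{not} deduce Corollary~\ref{cor:[Torus]} from Theorem~\ref{thm:[Torus]}: the logarithm law is stated (and proved) without the extra hypotheses of Theorem~\ref{thm:[Torus]} --- bounded support of $\mu$ and $\det(g-I)\neq 0$ on $S_{\mu}$ --- so invoking that theorem would weaken the corollary. Moreover, Theorem~\ref{thm:[Torus]} only yields the limit for a.e.\ $x_0$, and your proposed upgrade to \emph{every} $x_0$ is not carried out: you correctly identify that the density/approximation route fails because the relevant scale $n^{-1/d}$ is finer than any fixed $\dd(x_0,x_0^{(k)})$, and the alternative ``dynamical Borel--Cantelli'' suggestion is left as a wish rather than an argument.

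The paper's route avoids both problems at once. It observes that the proof of Lemma~\ref{lem:unD'} (hence Theorem~\ref{thm:Res1}, Corollary~\ref{cor:aRes}, and Corollary~\ref{cor:Res3}) goes through verbatim on the torus, using only the spectral gap of $A$ and the exact tail $\PP(\xi_0>u_n)=n^{-1}V_d e^{-dr}$ in place of the $k$-DL hypothesis. Both of these ingredients hold for \emph{every} $x_0$ and require none of the extra assumptions of Theorem~\ref{thm:[Torus]}. The lower bound on the $\limsup$ then comes from the sparse-maximum estimate of Corollary~\ref{cor:aRes}: for $a$ large one has $\theta_\lambda<0$, so $\limsup_n \PP(M_{n,a}\le r+\tfrac1d\log n)\le e^{\theta_\lambda e^{-dr}}\to 0$ as $r\to-\infty$, and the contradiction argument in the proof of Corollary~\ref{cor:Res3} finishes the job. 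This is the step you are missing: rather than the full limiting distribution at a.e.\ $x_0$, one only needs a nontrivial upper bound on the distribution of a \emph{sparse} maximum, and that bound is available uniformly in $x_0$ from spectral gap alone.
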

Actually we will see later that we only need a sufficiently good upper bound on the limiting distribution of $M_n$ to derive the logarithm law.

%
%
%
%
\subsection{Shortest vectors on the space of unimodular lattices}
\label{SecRes2}

Let $\TX=\tyk{L}_d$ denote the space of $d$-dimensional unimodular lattices and let $m$ denote the normalized Haar measure on $\tyk{L}_d$. Recall that $\tyk{L}_d$ can be identified with $\drdz$ and thus can be thought of as a homogeneous space. Let $G=\dr$ and fix a probability measure $\mu$ on the group. Assume that $\overline{G_{\mu}}$ is non-amenable. Set
\begin{align}
\label{eq:ShortestVec}
\Delta(\Lambda)=\max_{v\in \Lambda\backslash\tub{0}}\log\para{\frac{1}{\norm{v}}}.
\end{align}
We see that this maximum will always be attained for the shortest vector in the lattice $\Lambda$. The function $\Delta$ plays a crucial role in connections between flows on the space of lattices in $\R^d$ and Diophantine approximation.

Define
\begin{align}
\label{eq:SparseMax}
M_{n,a}=\max_{0\leq i<n} \Delta(X_{ai})
\end{align}
\begin{thm}[]
\label{thm:[Lattice1]}
Set $u_n=r+\frac1d\log n$. Let $w=\frac{V_d}{2\zeta(d)}$ where $V_d$ is the volume of the unit ball in $\R^d$. There exist constants $w(a)\in \R$ such that $w(a)\to w$ as $a\to\infty$ and 
\begin{align*}
e^{-we^{-dr}}\leq \liminf_{n\to\infty}\PP(M_{n,a}\leq u_n)\leq \limsup_{n\to\infty} \PP(M_{n,a}\leq u_n)\leq e^{-w(a)e^{-dr}}.
\end{align*}
\end{thm}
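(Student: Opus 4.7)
The plan is to apply the classical Leadbetter framework (as in the EVT-for-dynamics versions such as conditions $D(u_n)$ and $D'(u_n)$ alluded to in the paper's macros) to the stationary sequence $Y_i = \Delta(X_{ai})$, with level $u_n = r + \frac{1}{d}\log n$. The target is a Poisson-type estimate of the form $\PP(M_{n,a} \le u_n) \approx \exp(-n \, m(\Delta > u_n))$. The three ingredients I would verify are (i) the correct asymptotic for the tail $m(\Delta > u_n)$, (ii) a long-range independence condition $D(u_n)$ handled by the spectral gap of the averaging operator, and (iii) a short-range non-clustering condition, which is the source of the correction $w(a)$ and prevents the conclusion from being an exact limit.

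For (i), I would use the classical volume asymptotic for the set of lattices with a short vector: as $\epsilon \to 0$,
\begin{equation*}
m\bigl(\{\Lambda \ssp \exists\, v \in \Lambda\setminus\{0\},\ \norm{v} < \epsilon\}\bigr) = \frac{V_d}{2\zeta(d)}\,\epsilon^d + o(\epsilon^d),
\end{equation*}
which follows from Siegel's mean value formula applied to the indicator of a small ball (the $1/2$ accounts for $\pm v$ and $\zeta(d)$ for summation over primitive vectors). Substituting $\epsilon = e^{-u_n} = e^{-r} n^{-1/d}$ yields $n \cdot m(\Delta > u_n) \to w e^{-dr}$ with $w = V_d/(2\zeta(d))$, which is exactly the candidate Poisson parameter.

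For (ii), the mixing condition $D(u_n)$ asks that joint events at well-separated indices factorise up to a small error. Since the $Y_i$ depend on the random walk only through the $a$-step composition, I would express the joint probability $\PP(Y_{i_1} \le u_n, \dots, Y_{i_k} \le u_n)$ as iterated applications of the averaging operator $P_\mu f(\Lambda) = \int f(g\Lambda)\,d\mu(g)$ against the indicator $\en_{\Delta \le u_n}$. Non-amenability of $\overline{G_{\mu}}$ gives a spectral gap for $P_\mu$ on the orthogonal complement of constants in $L^2(m)$, and iterating this across the gap of $a(i_{j+1}-i_j)$ steps produces an exponentially small correlation term. This handles condition $D$ uniformly in $n$.

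The main obstacle, and the reason for the upper/lower bounds rather than a limit, is (iii): a suitable $D'(u_n)$-type non-clustering bound for $Y_i$. One has to estimate $\sum_{j=1}^{\lfloor n/k\rfloor} \PP(Y_0 > u_n, Y_j > u_n)$. Short vectors persist: if $X_{ai}$ contains a vector of length $\le \epsilon$, then after $a$ further random steps the transported vector still has length controlled by the norm of the $a$-step product of elements in $S_{\mu}$, so immediate successors cluster. The trick is to split the sum into a short-range piece $j \le j_0(a)$ controlled via a geometric estimate on how much the $\dr$-action can stretch short vectors in $a$ steps (this introduces a constant $w(a)$ that overestimates the mean exceedance rate) and a long-range piece $j > j_0(a)$ handled again by the spectral gap and the tail asymptotic. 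As $a \to \infty$, the time-$a$ dynamics on short vectors becomes well-mixed on the cone of directions, the possibility of a transported short vector re-becoming short disappears, and $w(a) \to w$. Combining the Leadbetter Poisson-convergence machinery with this bilateral $D'$ estimate yields the inequality
\begin{equation*}
e^{-w e^{-dr}} \le \liminf_{n\to\infty} \PP(M_{n,a}\le u_n) \le \limsup_{n\to\infty} \PP(M_{n,a}\le u_n) \le e^{-w(a) e^{-dr}},
\end{equation*}
with the lower bound coming from ignoring the clustering correction entirely (as clustering can only make $\PP(M_{n,a}\le u_n)$ larger) and the upper bound from controlling exceedances via the modified $D'$ with constant $w(a)$.
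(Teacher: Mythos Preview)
Your steps (i) and (ii) are essentially correct and match the paper: the tail asymptotic $m(\Delta>u_n)\sim \tfrac{V_d}{2\zeta(d)}e^{-du_n}$ is exactly the $d$-SDL property the paper imports from Kleinbock--Margulis, and Condition $D(u_n)$ is verified in the paper precisely by writing the joint distribution through iterates of the averaging operator and invoking the spectral gap (from Shalom).

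The divergence, and the gap, is in (iii). The paper does \emph{not} use any geometric argument about how the random walk stretches short vectors. Instead it bounds $\PP(Y_0>u_n,\,Y_j>u_n)$ for \emph{every} $j\ge 1$ by the single Cauchy--Schwarz/spectral-gap estimate
\[
\PP(Y_0>u_n,\,Y_j>u_n)\le \PP(Y_0>u_n)^2 + c_0\,\lambda^{aj}\,\PP(Y_0>u_n),
\]
obtained by writing the left side as $\int \psi\,A^{aj}\psi\,dm$ with $\psi=\een_{\{\Delta>u_n\}}$. Summing over $j$ gives a weakened $D'$-type bound with $g(r)=\tfrac{\lambda^a}{1-\lambda^a}c_0\,w\,e^{-dr}$, and feeding this into the modified Leadbetter theorem yields the stated inequalities with $w(a)=w-\tfrac{\lambda^a}{1-\lambda^a}c_0\,w$. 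The convergence $w(a)\to w$ is then nothing more than $\lambda^a\to 0$; no dynamics of short vectors is involved.

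Your proposed short-range control is problematic in this setting for several reasons: the theorem carries no bounded-support or moment assumption on $\mu$, so ``the norm of the $a$-step product'' has no uniform bound to exploit; the short vector witnessing $Y_j>u_n$ need not be the transported original short vector; and the assertion that $w(a)\to w$ because ``the time-$a$ dynamics on short vectors becomes well-mixed on the cone of directions'' is neither made precise nor obviously true. In short, the paper's route bypasses all of this by letting the spectral gap do the entire job in (iii) as well as in (ii), and the constant $w(a)$ is an explicit function of the spectral data rather than of any vector-stretching geometry.
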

As the reader will notice, we are not able to prove an exact limiting distribution. Instead we get an upper and lower bound only differing by a constant multiple which goes to zero as the random walk becomes infinitely sparse. The difference between this case and the random walk on the torus is that one of the independence-like conditions from EVT is not fully satisfied in this setup.
It is natural to ask what additional assumptions would suffice to prove an exact limit. This question is answered by the following theorem.
\begin{thm}[]
\label{thm:[Lattice2]}
Let $\tub{m_j}$ be a sequence in $\N$ such that $\tub{m_{j+1}-m_j}$ is strictly increasing. Also, let $\alpha_n<\beta_n$ denote sequences in $\N$ such that $\alpha_n\to\infty$ and $N_n:=\beta_n-\alpha_n\to\infty$. Then for $u_n=r+\frac1d\log N_n$ we have
\begin{align*}
\lim_{n\to\infty} \PP\para{\max_{\alpha_n\leq j<\beta_n}\Delta\para{X_{m_j}}\leq u_n}=e^{-we^{-dr}},
\end{align*}
where $w$ is the constant from Theorem \ref{thm:[Lattice1]}.
\end{thm}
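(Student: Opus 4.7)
The strategy is to fit the sequence $Y_j := \Delta(X_{m_j})$, $\alpha_n \le j < \beta_n$, into Leadbetter's extreme value framework by verifying a long-range mixing condition $D(u_n)$ and an anti-clustering condition $D'(u_n)$, after which the standard EVT theorem yields the double exponential limit from the correct marginal tail. The key feature of the present hypothesis, compared to Theorem \ref{thm:[Lattice1]}, is that the gaps $m_{j+1}-m_j$ are strictly increasing and hence tend to infinity; since $\alpha_n\to\infty$, every pair of consecutive sampling times in the window $[\alpha_n,\beta_n)$ is separated by at least $m_{\alpha_n+1}-m_{\alpha_n}\to\infty$. This growing sparsity is precisely what elevates the approximate bounds of Theorem \ref{thm:[Lattice1]} to an exact limit. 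First I would verify the marginal calibration $N_n\cdot\PP(\Delta(X_{m_{\alpha_n}}) > u_n)\to w\,e^{-dr}$ from the cusp volume asymptotic $m\{\Lambda : \Delta(\Lambda)>u\} = w\,e^{-du}(1+o(1))$ together with stationarity. For $D(u_n)$ the decorrelation between separated blocks is controlled through the averaging operator $P_\mu f(x)=\int f(gx)\,d\mu(g)$: the relevant joint probabilities, after rewriting them with this operator, reduce to inner products involving $P_\mu^{\,m_q-m_p}$ applied to bounded observables, and the spectral gap implied by non-amenability of $\overline{G_\mu}$ provides exponential decay in $m_q-m_p$, which dominates the required Leadbetter rate.

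The heart of the argument is verifying the anti-clustering condition
\[
\lim_{k\to\infty}\,\limsup_{n\to\infty}\, N_n\sum_{j=1}^{\lfloor N_n/k\rfloor}\PP\bigl(Y_{\alpha_n}>u_n,\,Y_{\alpha_n+j}>u_n\bigr) \;=\; 0.
\]
Pairwise tails are estimated through the same $P_\mu$-spectral calculation used to prove Theorem \ref{thm:[Lattice1]}; there the bound produced only the constant $w(a)$ because the effective spacing was fixed at $a$. Under the present hypothesis the spacing between any two consecutive samples inside the window is at least $m_{\alpha_n+1}-m_{\alpha_n}\to\infty$, so the pairwise estimate improves to one with effective $a\to\infty$, whose limiting constant is $w$, and the clustering sum vanishes. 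Combined with the marginal step and $D(u_n)$, Leadbetter's theorem then delivers $\lim_n\PP(\max_{\alpha_n\le j<\beta_n}Y_j\le u_n)=e^{-we^{-dr}}$.

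The principal obstacle is making the quantitative form of $D'(u_n)$ uniform in $n$. One must split the inner sum into short-range terms (small $j$), where the strictly-increasing-gap hypothesis forces the effective spacing to infinity and drives $w(a)\to w$, and into longer-range terms, where the spectral gap provides geometric decay of $\PP(Y_{\alpha_n}>u_n,\,Y_{\alpha_n+j}>u_n)-\PP(Y>u_n)^2$. Balancing these two regimes, and verifying that $\alpha_n\to\infty$ is strong enough to push every relevant short-range spacing beyond any prescribed threshold while the window $N_n$ is simultaneously allowed to grow, is the delicate point of the argument.
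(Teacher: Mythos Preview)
Your proposal is in the right spirit—the spectral gap of the averaging operator and the strictly increasing gaps are indeed the two ingredients—but both the route and one technical point differ from the paper.

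\textbf{What the paper does.} The paper does \emph{not} verify $D(u_n)$ and $D'(u_n)$ for this theorem at all. Instead it uses the spectral gap estimate \eqref{eq:SGfinal2} directly and iteratively: writing $\bar m(n)=(m_{\alpha_n},\dots,m_{\beta_n-1})$ and peeling off one coordinate at a time,
\[
\PP(\Lambda_{\bar m(n)})=\Bigl(\int_\TX\psi\,dm\Bigr)\PP(\Lambda_{(m_{\alpha_n+1},\dots,m_{\beta_n-1})})+O\bigl(\lambda^{m_{\alpha_n+1}-m_{\alpha_n}}\bigr),
\]
and after $N_n$ iterations
\[
\PP(\Lambda_{\bar m(n)})=\PP(\xi_0\le u_n)^{N_n}+O\Bigl(\sum_{i=\alpha_n}^{\beta_n-2}\lambda^{m_{i+1}-m_i}\Bigr).
\]
Because the gaps are strictly increasing and $\alpha_n\to\infty$, the error sum is $O(\lambda^{m_{\alpha_n+1}-m_{\alpha_n}})\to 0$. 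The $d$-SDL property of $\Delta$ then gives $\PP(\xi_0\le u_n)^{N_n}\to e^{-we^{-dr}}$ (Lemma~\ref{lem:PLambn}). No blocking, no $D'(u_n)$, no Leadbetter theorem is invoked; the argument is a two-line telescoping computation once \eqref{eq:SGfinal2} is available.

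\textbf{The gap in your route.} Leadbetter's Theorem~\ref{thm:341orig} as stated requires a \emph{stationary} sequence, but $Y_j=\xi_{m_j}$ is only identically distributed, not stationary: the joint law of $(Y_j,Y_{j+1})$ depends on $m_{j+1}-m_j$, which varies with $j$. Your formulation of $D'(u_n)$ with the base index $\alpha_n$ tacitly assumes the worst pair correlation occurs at the left endpoint, which is true here because the gaps increase, but you would still need a triangular-array version of Leadbetter's theorem (or to redo its proof) to make the conclusion rigorous. That is doable, yet once you start writing out the block estimates you will find yourself reproducing exactly the iterated spectral-gap computation above—so the paper's direct argument is both shorter and avoids this detour.
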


Again we obtain a logarithm law.
\begin{cor}
\label{cor:[Lattice]}
For $\PP$-almost every random walk and every $x_0\in \TX$ we have
\begin{align*}
\limsup_{n\to\infty}\frac{\Delta(X_n)}{\log n}=\frac{1}{d}.
\end{align*}
\end{cor}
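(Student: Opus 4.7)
The plan is to deduce Corollary \ref{cor:[Lattice]} from Theorem \ref{thm:[Lattice1]} by a Borel--Cantelli style argument, mirroring the derivation of the logarithm law in the torus case.

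\textbf{Upper bound.} To show $\limsup_n \Delta(X_n)/\log n \leq 1/d$ almost surely, I would first observe that by $\mu$-invariance of $m$, $X_n$ has marginal law $m$ for every $n$. Hence $\PP(\Delta(X_n) > t)$ equals the $m$-measure of unimodular lattices whose shortest vector has norm below $e^{-t}$, and the Siegel-type asymptotic gives $\PP(\Delta(X_n) > t) \sim w\, e^{-dt}$ with $w = V_d/2\zeta(d)$. For any $\epsilon > 0$ the series $\sum_n \PP(\Delta(X_n) > (1/d + \epsilon)\log n)$ converges, so the first Borel--Cantelli lemma forces $\Delta(X_n) \leq (1/d + \epsilon)\log n$ eventually, almost surely. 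Letting $\epsilon \to 0$ finishes this direction; notice that this half does not even use Theorem \ref{thm:[Lattice1]}, consistent with the author's earlier remark that only a sufficient upper bound on the limiting distribution is needed for the logarithm law.

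\textbf{Lower bound.} For $\limsup_n \Delta(X_n)/\log n \geq 1/d$ almost surely, Theorem \ref{thm:[Lattice1]} is essential. Fix $\epsilon > 0$ and choose $a$ large enough that $w(a) > 0$. Set $n_k := 2^k$ and $r_k := -\epsilon \log n_k$, so that $u_{n_k} = r_k + \tfrac{1}{d}\log n_k = (1/d - \epsilon)\log n_k$. Applying the upper bound of Theorem \ref{thm:[Lattice1]} (in a version uniform in $r$ over a logarithmic window) gives
\[
\PP\bigl( M_{n_k, a} \leq (1/d - \epsilon)\log n_k \bigr) \leq e^{-w(a)\, n_k^{d\epsilon}} + o(1) \longrightarrow 0,
\]
so the events $E_k := \{M_{n_k, a} > (1/d - \epsilon)\log n_k\}$ satisfy $\PP(E_k) \to 1$. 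Writing $L_n := \max_{0 \leq j \leq n} \Delta(X_j)$, which is monotone nondecreasing, we have $L_{an_k} \geq M_{n_k, a}$, so on $E_k$ one has $L_{an_k}/\log(an_k) \geq (1/d - \epsilon)(1 + o(1))$. Since $\PP\bigl(\bigcup_{j \geq k} E_j\bigr) \geq \PP(E_j) \to 1$ for every $k$, the intersection $\bigcap_k \bigcup_{j \geq k} E_j$ has full probability, i.e.\ $E_k$ occurs infinitely often almost surely. Combined with the elementary identity $\limsup_n L_n/\log n = \limsup_n \Delta(X_n)/\log n$, this yields $\limsup_n \Delta(X_n)/\log n \geq 1/d - \epsilon$ almost surely, and $\epsilon \to 0$ completes the proof.

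\textbf{Main obstacle.} The main technical point is that Theorem \ref{thm:[Lattice1]} is stated with $r$ held fixed while $n \to \infty$, whereas the lower-bound argument needs to substitute $r_k = -\epsilon \log n_k \to -\infty$. Making this rigorous requires extracting from the proof of Theorem \ref{thm:[Lattice1]} a quantitative version in which the error terms remain controlled when $r$ is allowed to drift with $n$ at a logarithmic rate. EVT arguments built on spectral gap estimates for the averaging operator usually yield this kind of uniformity, but it is the step of the proof that deserves the most careful bookkeeping; the alternative is to invoke Theorem \ref{thm:[Lattice2]} on the non-overlapping dyadic windows $[n_k, 2n_k)$ along a sparse sequence $\{m_j\}$ with strictly increasing gaps, which provides an exact limit and bypasses the uniformity issue.
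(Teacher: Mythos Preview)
Your upper bound via Borel--Cantelli is exactly what the paper does (in the general Corollary \ref{cor:Res3}). The divergence is in the lower bound, and you have correctly put your finger on the gap: Theorem \ref{thm:[Lattice1]} is only stated for fixed $r$, so plugging in $r_k=-\epsilon\log n_k\to-\infty$ is not justified by the theorem as written. You would need to reopen the EVT argument and track the dependence on $r$, which is extra work.

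The paper avoids this entirely, and it is worth seeing how. Instead of letting $r$ drift, it argues by contradiction with a \emph{fixed} $r$. Assume $\PP\bigl(\limsup_n \xi_n/\log n\le \tfrac1d-\epsilon\bigr)>0$; then for some $n_1$ and some $\delta>0$,
\[
\PP\Bigl(\sup_{j>n_1}\tfrac{\xi_j}{\log j}\le \tfrac1d-\tfrac{\epsilon}{2}\Bigr)>\delta.
\]
Now pick $a$ large so that $\theta_\lambda<0$ in Corollary \ref{cor:aRes}, and then pick a fixed $s>0$ small enough that $e^{\theta_\lambda s^{-1}}<\delta/2$. The corollary (applied with the fixed $r=\tfrac1d\log s$) gives, for all large $n$,
\[
\PP\Bigl(\max_{n_1\le j<n_1+n}\xi_{aj}\le \tfrac1d\log(sn)\Bigr)<\delta,
\]
where stationarity was used to shift the window. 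Finally one checks the elementary inequality $(\tfrac1d-\tfrac{\epsilon}{2})\log(n_1+n)<\tfrac1d\log(sn)$ for large $n$, which forces
\[
\PP\Bigl(\max_{n_1\le j<n_1+n}\xi_{aj}\le (\tfrac1d-\tfrac{\epsilon}{2})\log(n_1+n)\Bigr)<\delta,
\]
contradicting the displayed inequality above. The point is that the comparison between $(\tfrac1d-\tfrac{\epsilon}{2})\log n$ and $\tfrac1d\log(sn)$ absorbs the drift you were trying to put into $r$: for any fixed $s$, the first is eventually smaller. So the paper never needs uniformity in $r$, only the limit statement at a single well-chosen $r$.

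Your proposed alternative via Theorem \ref{thm:[Lattice2]} would also work, but again only after the same trick of fixing $r$ and comparing thresholds; it does not by itself escape the uniformity issue.
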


%
%
%
%

\subsection{Maximal excursions on homogeneous spaces}
\label{SecRes3}

Let $\TX=G/\Gamma$ where $G$ is a simple, non-compact Lie group with finite center and $\Gamma$ a non-uniform lattice in $G$. Let $m$ denote the normalized Haar measure on $\TX$ and fix also a probability measure $\mu$ on $G$. Assume that $\overline{G_{\mu}}$ is non-amenable.

We are interested in the maximal distance a random walk gets away from some arbitrary fixed point $x_0\in \TX$. Therefore, define
\begin{align*}
\Delta(x)=\dd(x,x_0)
\end{align*}
where $\dd$ is a Riemannian metric on $\TX$ chosen by fixing a right invariant Riemannian metric on $G$ which is bi-invariant with respect to a maximal compact subgroup of $G$.  Let $M_{n,a}$ be defined as in \eqref{eq:SparseMax}.
\begin{thm}[]
\label{thm:[Cusp]}
There exists constants $k>0$, $w>0$ and $w(a)\in\R$ such that for sufficiently large $a$ we have $w(a)>0$ and 
\begin{align*}
e^{-we^{-kr}}\leq\liminf_{n\to\infty} \PP(M_{n,a}\leq u_n)\leq \limsup_{n\to\infty} \PP(M_{n,a}\leq u_n)\leq e^{-w(a) e^{-kr}},
\end{align*}
where $u_n=r+\frac1k\log n$.
\end{thm}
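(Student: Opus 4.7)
The strategy mirrors that of \refthm{thm:[Lattice1]}: apply the general EVT framework to the sparse subsequence $\{X_{ai}\}_{i=0}^{n-1}$ by verifying a long-range decorrelation condition of type $D(u_n)$ via the spectral gap of the averaging operator, and handle the residual short-range clustering by choosing $a$ large. The first ingredient is a tail estimate for $\Delta$. By reduction theory for the non-uniform lattice $\Gamma$ in the simple non-compact Lie group $G$ (Siegel sets, volumes of cuspidal neighbourhoods), the measure $m(\{x\in \TX \ssp \dd(x,x_0)>u\})$ decays like $Ce^{-ku}$ for constants $C,k>0$ depending only on $G/\Gamma$. With $u_n=r+\frac{1}{k}\log n$ this gives $n\, m(\{\Delta>u_n\})\to we^{-kr}$ with $w=C>0$, fixing both $k$ and $w$ in the statement.

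\medskip

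Non-amenability of $\overline{G_\mu}$ yields a spectral gap for the averaging operator $A_\mu f(x)=\int f(gx)\, d\mu(g)$ on $L^2(\TX)$, so there exists $\lambda<1$ with
\begin{align*}
\norm{A_\mu^N f - \textstyle\int f\, dm}_{L^2}\leq C\lambda^N \norm{f}_{L^2}
\end{align*}
on a suitable subspace of regular functions. Writing joint exceedance probabilities as iterated integrals of $A_\mu^{a(i_j-i_{j-1})}$ acting on indicators of $\{\Delta\leq u_n\}$ and telescoping, one obtains for indices with $\min_j(i_j-i_{j-1})\geq q$
\begin{align*}
\PP\para{\Delta(X_{ai_j})\leq u_n,\ j=1,\dots,p}= m(\{\Delta\leq u_n\})^{p}+O\para{p\lambda^{aq}}.
\end{align*}
This plays the role of the $D(u_n)$ condition and, combined with a union bound over blocks of slowly growing length, produces the lower bound $\PP(M_{n,a}\leq u_n)\geq e^{-we^{-kr}}+o(1)$. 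Since indicator functions are not regular enough to feed into the spectral gap directly, throughout one sandwiches them between smooth sub/super approximations, the smoothing error being controlled by the tail asymptotic from the first step.

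\medskip

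For the upper bound, a Bonferroni argument reduces matters to controlling the double sum $\sum_{j\geq 1}\PP\para{\Delta(X_0)>u_n,\Delta(X_{aj})>u_n}$. Applying the spectral gap to the two-point correlations with the tail estimate, this sum is bounded by $(w-w(a))e^{-kr}+o(1)$ for some constant $w(a)\in\R$ depending on $a$, with $w(a)>0$ once $a$ is large enough to dominate the cusp-persistence time. Combining yields $\PP(M_{n,a}\leq u_n)\leq e^{-w(a)e^{-kr}}$.

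\medskip

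The hard part is the short-range clustering: unlike the torus setting of \refthm{thm:[Torus]}, where $D'(u_n)$ is recoverable, in the cuspidal setting an excursion at time $aj$ may force further excursions at nearby times $a(j+1),a(j+2),\dots$, preventing a clean $D'(u_n)$. Unlike \refthm{thm:[Lattice1]}, where Minkowski--Rogers-type identities give enough quantitative control to pass $w(a)\to w$ as $a\to\infty$, on a general $G/\Gamma$ no such explicit tools are available, and one is left with potentially different constants $w$ and $w(a)$. Quantifying the unavoidable cusp persistence precisely enough to guarantee a strictly positive $w(a)$ for large $a$ is where the spectral gap for $A_\mu$ is used most critically, and is the main technical step of the proof.
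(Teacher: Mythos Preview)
Your overall strategy---tail estimate for $\Delta$, spectral gap for the averaging operator, then the EVT machinery of Theorem~\ref{thm:341}---is exactly the paper's route (via Corollary~\ref{cor:aRes}). However, two specific points in your write-up are off.

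First, you assert an \emph{exact} tail asymptotic $n\,m(\{\Delta>u_n\})\to we^{-kr}$. This is the $k$-SDL property, and for the Riemannian distance on a general $G/\Gamma$ only the weaker $k$-DL property is available (Kleinbock--Margulis): there exist $v_1,v_2>0$ with $v_1 e^{-ku}\leq \Phi_\Delta(u)\leq v_2 e^{-ku}$, but no limit. The paper's constants are accordingly $w=v_2$ (the upper tail constant, which via Theorem~\ref{thm:341} feeds the \emph{lower} bound on $\PP(M_{n,a}\leq u_n)$) and $w(a)=v_1-\frac{\lambda^a}{1-\lambda^a}c_0 v_2$ (the lower tail constant minus the short-range residue). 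There is no single $w=C$ coming from an exact limit.

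Second, you misdiagnose the difference from Theorem~\ref{thm:[Lattice1]}. The reason $w(a)\to w$ holds there but not here is \emph{not} that the lattice case has Minkowski--Rogers-type tools for the short-range clustering; in both settings the short-range term is handled identically by the spectral-gap bound $\sum_j \lambda^{aj}=\frac{\lambda^a}{1-\lambda^a}$, which vanishes as $a\to\infty$. The difference is purely in the tail: on $\tyk{L}_d$ the function $\Delta$ is $d$-SDL, so $v_1=v_2=w$ and $w(a)\to w$; here $\Delta$ is only $k$-DL, so $w(a)\to v_1$ while $w=v_2$, and these need not coincide. Relatedly, the spectral gap (from Shalom) holds on all of $L^2(\TX,m)$ and applies directly to indicator functions---no smooth approximation is needed.
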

\begin{remark}
\label{rem:k}
The constant $k$ is explicit and has been computed in \cite{KleinMarg} (Lemma 5.6).
\end{remark}
Again we do not obtain an exact limit and again this relates to the inability to verify one of the independence-like conditions from EVT. 
%
%
In this setting we also do not prove an analogue of Theorem \ref{thm:[Lattice2]}. The reason is that we need to know exact asymptotics for the tail distribution function of $\Delta$, a property which we call $k$-SDL (see definition \ref{DistanceLike}). While this was proven in \cite{KleinMarg} for the shortest vectors on $\tyk{L}_d$, only a weaker property called $k$-DL is known for the Riemannian distance on homogeneous spaces.

As in the previous cases a logarithm law follows from Theorem \ref{thm:[Cusp]}.
\begin{cor}[]
\label{cor:[Cusp]}
For $\PP$-almost every random walk and for all $x_0\in X$ we have
\begin{align*}
\limsup_{n\to\infty}\frac{\Delta(X_n)}{\log n}=\frac{1}{k}.
\end{align*}
Here $k>0$ is again the constant from Remark \ref{rem:k}.
\end{cor}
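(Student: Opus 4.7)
The corollary splits into the two directions $\limsup \leq 1/k$ and $\limsup \geq 1/k$ a.s., which I treat separately. For the upper bound I use only the $k$-DL tail estimate, which is known in this setting: there is $C > 0$ with $m(\{x : \dd(x,x_0) > t\}) \leq C e^{-kt}$ for all large $t$. Since $m$ is $G$-invariant and the $g_i$ are i.i.d., each $X_n$ has distribution $m$, so $\PP(\Delta(X_n) > t) \leq C e^{-kt}$. For fixed rational $\epsilon > 0$ and $t_n = (1/k + \epsilon)\log n$, $\PP(\Delta(X_n) > t_n) \leq C n^{-1 - k\epsilon}$ is summable, so the first Borel--Cantelli lemma gives $\limsup_n \Delta(X_n)/\log n \leq 1/k + \epsilon$ almost surely, and intersecting over rational $\epsilon > 0$ yields $\limsup \leq 1/k$ a.s.

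\textbf{Lower bound.} Fix $\epsilon > 0$, pick $a$ large enough that $w(a) > 0$ in Theorem \ref{thm:[Cusp]}, and fix $r$ so negative that $1 - e^{-w(a) e^{-kr}}$ is close to $1$. Partition $\N$ into dyadic blocks $I_l = [2^l, 2^{l+1})$ and set $n_l = \lfloor 2^l / a \rfloor$ so that $\{2^l + ai : 0 \leq i < n_l\} \subset I_l$. By stationarity the shifted walk $(X_{2^l + i})_{i \geq 0}$ has the same distribution as $(X_i)_i$, and Theorem \ref{thm:[Cusp]} applied to this shifted walk yields
$$\PP\Bigl(\max_{0 \leq i < n_l} \Delta(X_{2^l + ai}) \leq r + \tfrac{1}{k} \log n_l\Bigr) \leq e^{-w(a) e^{-kr}} + o(1).$$
A short calculation shows that for $l$ large (depending on $r, a, \epsilon$), $r + \tfrac{1}{k} \log n_l > (1/k - \epsilon) \log j$ for every $j \in I_l$, so the complement of the above event is contained in $E_l := \{\exists j \in I_l : \Delta(X_j) > (1/k - \epsilon)\log j\}$, giving $\PP(E_l) \geq 1 - e^{-w(a) e^{-kr}} - o(1)$. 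The $E_l$ lie in disjoint time blocks but are not independent. To conclude $E_l$ occurs infinitely often a.s., I appeal to L\'evy's conditional Borel--Cantelli lemma: writing $\mathcal{F}_l = \sigma(X_0, \ldots, X_{2^l})$, the Markov property gives $\PP(E_l \mid \mathcal{F}_l) = \PP^{X_{2^l}}(E_l^{\mathrm{sh}})$, and the spectral-gap equidistribution underlying Theorem \ref{thm:[Cusp]} allows one to bound this conditional probability from below by a positive constant uniformly in the starting point for all large $l$. L\'evy's lemma then gives $E_l$ i.o.\ a.s., so $\limsup_n \Delta(X_n)/\log n \geq 1/k - \epsilon$ a.s., and intersecting over rational $\epsilon > 0$ completes the lower bound. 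Uniformity in $x_0$ is immediate: $|\dd(x,x_0) - \dd(x,x_0')| \leq \dd(x_0, x_0')$ is independent of $x$, so dividing by $\log n$ the limsup does not depend on $x_0$ and a single $\PP$-null exceptional set works for all $x_0$.

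The principal obstacle is the uniform lower bound $\PP(E_l \mid \mathcal{F}_l) \geq c > 0$ required for the conditional Borel--Cantelli step: Theorem \ref{thm:[Cusp]} as stated concerns the walk with initial distribution $m$, and transferring it to arbitrary starting points requires careful use of the quantitative spectral-gap/equidistribution property that feeds the proof of the theorem. An alternative route is a Kochen--Stone argument, bounding the pairwise correlations $\PP(E_l \cap E_{l'})$ via exponential decay of correlations for the random walk on $G/\Gamma$; this bypasses conditioning but requires essentially the same quantitative mixing input.
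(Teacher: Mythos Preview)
Your upper bound is fine and matches the paper's argument (Borel--Cantelli with the $k$-DL tail). The issue is the lower bound.

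You yourself flag the gap: the conditional Borel--Cantelli step needs $\PP(E_l\mid\mathcal{F}_l)\geq c>0$ almost surely, i.e.\ a lower bound on the extreme-value probability \emph{starting from an arbitrary point} $X_{2^l}$. Nothing in the paper gives this. The only mixing input available is the $L^2$ spectral gap \eqref{eq:SGdef}, and Theorem~\ref{thm:[Cusp]} is proved only for the walk started from the invariant measure $m$. Passing from an $L^2$ estimate to a pointwise (uniform-in-starting-point) estimate is a genuinely different and harder statement; it is not a routine consequence of spectral gap. Your Kochen--Stone alternative has the same defect at one remove: the pairwise bound $\PP(E_l\cap E_{l'})\leq C\,\PP(E_l)\PP(E_{l'})$ is plausible but not derived, and in any case Kochen--Stone only yields $\PP(\limsup E_l)>0$, so you would still need a $0$--$1$ law to finish.

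The paper avoids all of this by arguing by contradiction (Corollary~\ref{cor:Res3}), and the point is that \emph{stationarity alone}---no conditioning, no pointwise control---suffices. If $\PP\bigl(\limsup\xi_n/\log n\leq 1/k-\epsilon\bigr)>0$, then by continuity from below there exist $n_1$ and $\delta>0$ with $\PP\bigl(\sup_{j>n_1}\xi_j/\log j\leq 1/k-\epsilon/2\bigr)>\delta$. Restricting to a finite window and then to a sparse arithmetic progression, one gets $\PP\bigl(\max_{n_1\leq j<n_1+n}\xi_{aj}\leq (1/k-\epsilon/2)\log(n_1+n)\bigr)>\delta$ for every $n$. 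On the other hand, Corollary~\ref{cor:aRes} says that for $a$ large (so that $\theta_\lambda<0$) and $r$ very negative, $\PP\bigl(\max_{0\leq j<n}\xi_{aj}\leq r+\tfrac1k\log n\bigr)$ can be made $<\delta$ for large $n$; by stationarity the same holds with the window shifted to $[n_1,n_1+n)$. A short comparison of thresholds produces $\delta<\delta$. The whole lower bound thus uses only the \emph{unconditional} upper bound of Theorem~\ref{thm:[Cusp]}/Corollary~\ref{cor:aRes} together with the trivial shift-invariance of $\PP$; no starting-point uniformity is needed.

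Your remark on uniformity in $x_0$ via $|\dd(x,x_0)-\dd(x,x_0')|\leq\dd(x_0,x_0')$ is correct and worth keeping.
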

This is a random walk analogue of the logarithm law Kleinbock and Margulis proved for geodesics. A natural question to ask is whether one could determine the extreme value distribution for the geodesic flow, since it would be a generalization of the logarithm law mentioned. One result in this direction is by Pollicott \cite{MarkP}. He determines the exact limiting distribution for the geodesic flow on $\iiriiz$. However, the proof uses connections between geodesics on the upper half plane and continued fractions, a connection that only exists for $d=2$.

%
%
%
%
\subsection{Structure of the paper}

We begin by giving a short introduction to extreme value theory in Section \ref{EVT}. This introduction is short and in no way a complete overview of the field. However, for the reader unfamiliar with extreme value theory, the section should be sufficient to understand this paper without having to look elsewhere. In Section \ref{MainRes} we formally define the random walk and introduce the main tools used in the paper. In this section we also show how the averaging operator and its spectral gap property is used to prove quasi-independence for the random walk. We prove various results for the limiting distribution of the random walk under general assumptions. In Section \ref{IntroProofs} we finalize the proofs of our main theorems using the results from the previous section and known results. For the case of the torus an additional argument is required which we give in this section as well.

%
%

\section{General extreme value theory}
\label{EVT}

EVT deals with determining the distributional properties of the maximum or minimum of a sequence of random variables $f_n$ as $n$ becomes large. This task is fairly simple if one assumes that the random variables are mutually independent. However, in many interesting cases we have some degree of dependence among the random variables. How much we can prove in the dependent case is related to how strong the dependency among the random variables is.

In the following we elaborate on the basics of EVT for stationary sequences of identically distributed random variables. For a reference on general extreme value theory, see \cite{LGH}.

Let $(\TX,\PP)$ be a probability space. Let $f_i$ denote some stationary, identically distributed sequence of random variables and let $M_n:=\max_{0\leq i< n}(f_i)$. We use the notation
\begin{align*}
F_{f_0,\dots,f_{n-1}}(r)=\PP(f_0\leq r,\dots,f_{n-1}\leq r)=\PP(\tub{f_0\leq r}\cap\cdots\cap\tub{f_{n-1}\leq r}).
\end{align*}
Notice that $F_{f_0,\dots,f_{n-1}}(r)=\PP(M_n\leq r)$. Also notice that since the $f_i$ are identically distributed we have $F_{f_i}(r)=F_{f_j}(r)$ for all $i,j\in \N$. We denote this common distribution simply by $F$.
We are concerned with the limiting distribution of $M_n$ under linear scalings $a_n^{-1}(M_n-b_n)$, where $a_n>0$ and $b_n$ are sequences of real numbers. By this we mean the limit
\begin{align*}
\lim_{n\to\infty} \PP\para{\frac{M_n-b_n}{a_n}\leq r},
\end{align*} 
where $r\in\R$.
The sequences $a_n$ and $b_n$, known as scaling sequences, are introduced in order to avoid cases of \textit{degenerate} limiting distributions, a notion we explain in the following. To understand why degenerate cases occur, look for example at any i.i.d. stochastic process. In this case we easily see that
\begin{align*}
\PP(M_n\leq r)=F(r)^n \to
\begin{cases}
1 & \text{if }\, r\in\tub{r:F(r)=1}\\
0 & \text{if }\, r\in\tub{r:0\leq F(r)<1}
\end{cases}.
\end{align*}
We call this a degenerate limiting distribution and we see that such one provides us with little information about $M_n$. Later in this section we discuss how to determine $a_n$ and $b_n$, but for now assume these exist such that
\begin{align*}
\PP\para{\frac{M_n-b_n}{a_n}\leq r}=\PP\para{M_n\leq a_n r+b_n}\to G(r),
\end{align*}
where $G:\R\to [0,1]$ is a non-degenerate distribution function. To simplify notation set $u_n:=a_n r +b_n$.

As mentioned, the i.i.d. case is the simplest, and in this case the limiting distribution of $M_n$ is known. When dealing with the dependent case, we are interested in stationary sequences that only exhibit little dependency. In other words, these are sequences that in some sense are close to being independent. This notion is formalized through two independence type conditions denoted $D(u_n)$ and $D'(u_n)$.
\begin{cond1*}
Condition $D(u_n)$ will be said to hold for $f_i$ and $u_n$ if for any integers $0\leq i_1<\dots<i_p<j_1<\dots<j_{p'}< n$ for which $j_1-i_p\geq l$, we have
\begin{align*}
\left\vert F_{f_{i_1},\dots,f_{i_p},f_{j_1},\dots,f_{j_{p'}}}(u_n)-F_{f_{i_1},\dots,f_{i_p}}(u_n)F_{f_{j_1},\dots,f_{j_{p'}}}(u_n) \right\vert\leq \alpha(n,l),
\end{align*}
where there exists a sequence $l_n$ s.t. $\alpha(n,l_n)\to 0$ as $n\to\infty$ and $\frac{l_n}{n}\to 0$ for $n\to\infty$.
\end{cond1*}
\begin{cond2*}
Condition $D'(u_n)$ will be said to hold for $f_i$ and $u_n$ if
\begin{align*}
\limsup_{n\to\infty} n\sum_{j=1}^{\left[\frac{n}{q}\right]}\PP(f_0>u_n,f_j>u_n)\to 0\;\;\text{ as }\;\; q\to\infty.
\end{align*}
\end{cond2*}
It is a standard result from EVT that if a stationary sequence $f_i$ satisfies these two conditions, then the limiting distribution of $M_n$ is the same as if $f_i$ were an i.i.d. process. This is the content of the following theorem.
\begin{thm}[\cite{LGH}, Theorem 3.4.1.]
\label{thm:341orig}
Let $u_n=a_n r+b_n$ be a scaling sequence s.t. $D(u_n)$ and $D'(u_n)$ are satisfied for the stationary sequence $f_n$. If $\tau=\tau(r)$ is a real function such that $n\PP(f_0>u_n)\to\tau$,
then
\begin{align}
\label{eq:limdist}
\lim_{n\to\infty} \PP(M_n\leq u_n)=e^{-\tau}.
\end{align}
\end{thm}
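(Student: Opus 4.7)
The strategy is a standard block argument from extreme value theory: condition $D(u_n)$ is used to factor $\PP(M_n\leq u_n)$ as an approximate product of maxima over $k$ well-separated blocks, and $D'(u_n)$ is used to control each block via a first-order Bonferroni estimate. The target shape is $(1-\tau/k)^k$, which tends to $e^{-\tau}$ as $k\to\infty$, so the plan is to let $n\to\infty$ with $k$ fixed and then send $k\to\infty$.

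First, fix $k\geq 1$ and let $l_n$ be the gap sequence from $D(u_n)$. Partition $\{0,1,\ldots,n-1\}$ into $k$ disjoint blocks $I_1,\ldots,I_k$ of common size $m=\lfloor n/k\rfloor$ (modulo lower-order corrections), separated by gaps of length at least $l_n$; since $l_n/n\to 0$ the discarded indices are negligible. Applying $D(u_n)$ iteratively $k-1$ times to the event $\{f_i\leq u_n\text{ for every }i\in I_1\cup\cdots\cup I_k\}$, and using stationarity at each step, gives
\begin{align*}
\abs{\PP(M_n\leq u_n)-\PP(M_m\leq u_n)^k}\leq (k-1)\alpha(n,l_n).
\end{align*}
For a single block, the union bound yields $\PP(M_m>u_n)\leq m\PP(f_0>u_n)$, while Bonferroni truncation combined with stationarity gives
\begin{align*}
\PP(M_m>u_n)\geq m\PP(f_0>u_n)-m\sum_{j=1}^{m-1}\PP(f_0>u_n,f_j>u_n).
\end{align*}
The correction term is dominated by $\tfrac{1}{k}\cdot n\sum_{j=1}^{\lfloor n/k\rfloor}\PP(f_0>u_n,f_j>u_n)$, whose $\limsup_{n\to\infty}$ is at most $\epsilon_k/k$ with $\epsilon_k\to 0$ as $k\to\infty$ by $D'(u_n)$. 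Combining with $m\PP(f_0>u_n)\to\tau/k$ (from the hypothesis $n\PP(f_0>u_n)\to\tau$) yields
\begin{align*}
1-\tau/k\leq \liminf_{n\to\infty}\PP(M_m\leq u_n)\leq \limsup_{n\to\infty}\PP(M_m\leq u_n)\leq 1-(\tau-\epsilon_k)/k.
\end{align*}

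Raising to the $k$-th power and feeding back into the block estimate, whose error $(k-1)\alpha(n,l_n)$ vanishes for fixed $k$ as $n\to\infty$, one obtains
\begin{align*}
(1-\tau/k)^k\leq \liminf_{n\to\infty}\PP(M_n\leq u_n)\leq \limsup_{n\to\infty}\PP(M_n\leq u_n)\leq \bigl(1-(\tau-\epsilon_k)/k\bigr)^k.
\end{align*}
Letting $k\to\infty$, both outer bounds converge to $e^{-\tau}$, which yields the claimed limit. The main obstacle is the careful handling of this ordered double limit: one must verify that the cumulative $D(u_n)$ error $(k-1)\alpha(n,l_n)$ vanishes for each fixed $k$, that the Bonferroni correction is uniformly $O(1/k)$ in $n$ as guaranteed by $D'(u_n)$, and that the $(k-1)l_n$ indices discarded in the gaps do not disturb the first-order asymptotic $m\PP(f_0>u_n)\to\tau/k$, all of which follow from the hypotheses $l_n/n\to 0$ and $\alpha(n,l_n)\to 0$.
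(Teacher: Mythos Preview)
The paper does not supply its own proof of this theorem: it is quoted verbatim as Theorem~3.4.1 from \cite{LGH} and used as a black box, with only the remark that the proof of the weakened Theorem~\ref{thm:341} is ``essentially similar''. Your proposal reproduces the standard block argument from that reference---partition into $k$ blocks separated by gaps of length $l_n$, use $D(u_n)$ iteratively to factor into a $k$-th power with error $(k-1)\alpha(n,l_n)$, control each block by Bonferroni with the correction handled by $D'(u_n)$, then let $n\to\infty$ followed by $k\to\infty$---and is correct; there is nothing further to compare.
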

For some cases of dependent stationary sequences, either or both of Condition $D(u_n)$ and $D'(u_n)$ are not satisfied. However, it is possible to weaken these conditions and still salvage some information about the limiting distribution. For the purpose of this paper we introduce the following weakened version of Condition $D'(u_n)$.
\begin{cond3*}
For the stationary sequence $f_i$, let
\begin{align*}
g_q(r):=\limsup_{n\to\infty} n\sum_{j=1}^{\left[\frac{n}{q}\right]}\PP(f_0>u_n,f_j>u_n).
\end{align*}
Condition $D_{g(r)}'(u_n)$ will be said to hold for $f_i$ and $u_n$ if
\begin{align*}
\limsup_{q\to\infty} g_q(r)\leq g(r),
\end{align*}
where $g:\R\to\R$ only depends on $r\in\R$.
\end{cond3*}

Under these weakened assumptions we can prove the following theorem.
\begin{thm}[]
\label{thm:341}
Let $u_n=a_n r+b_n$ be a scaling sequence s.t. $D(u_n)$ and $D_{g(r)}'(u_n)$ are satisfied for the stationary sequence $f_i$. If $\tau_1=\tau_1(r)$ and $\tau_2=\tau_2(r)$ denote real functions such that
\begin{align}
\label{eq:uplowbound2}
\tau_1\leq\liminf_{n\to\infty}n\PP(f_0>u_n)\leq\limsup_{n\to\infty}n\PP(f_0>u_n)\leq\tau_2.
\end{align}
Then
\begin{align*}
e^{-\tau_2}\leq\liminf_{n\to\infty} \PP(M_n\leq u_n)\leq \limsup_{n\to\infty} \PP(M_n\leq u_n)\leq e^{g(r)-\tau_1}.
\end{align*}
\end{thm}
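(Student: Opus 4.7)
The strategy is to adapt the block-decomposition argument underlying the proof of Theorem~\ref{thm:341orig} (Theorem~3.4.1 of \cite{LGH}), accommodating the weaker hypothesis $D'_{g(r)}(u_n)$ and the fact that $n\PP(f_0>u_n)$ is now only sandwiched between $\tau_1$ and $\tau_2$ rather than converging to a single $\tau$. For a parameter $q\in\N$, I would partition $\{0,1,\dots,n-1\}$ into $q$ blocks of length $N:=\left[\frac{n}{q}\right]$ separated by gaps of length $l_n$, with $l_n$ supplied by condition $D(u_n)$ so that $\alpha(n,l_n)\to 0$ and $l_n/n\to 0$. Iterating $D(u_n)$ across consecutive blocks, and absorbing the indices discarded in the gaps via the union bound and the fact that $\PP(f_0>u_n)=O(1/n)$, yields
\begin{align*}
\left|\PP(M_n\le u_n)-\PP(M_N\le u_n)^q\right|\le q\,\alpha(n,l_n)+O\!\left(\frac{q\,l_n}{n}\right),
\end{align*}
which vanishes as $n\to\infty$ for each fixed $q$.

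Next I would apply two-sided Bonferroni estimates inside a single block. Using the union bound and stationarity,
\begin{align*}
1-N\PP(f_0>u_n)\;\le\;\PP(M_N\le u_n)\;\le\;1-N\PP(f_0>u_n)+N\sum_{j=1}^{N}\PP(f_0>u_n,\,f_j>u_n).
\end{align*}
Condition $D'_{g(r)}(u_n)$ enters through the pair sum: since $qN\approx n$, multiplying the sum by $q$ produces (up to $o(1)$) precisely the quantity whose limsup in $n$ defines $g_q(r)$, so
\begin{align*}
\limsup_{n\to\infty}N\sum_{j=1}^{N}\PP(f_0>u_n,\,f_j>u_n)\;\le\;\frac{g_q(r)}{q}.
\end{align*}

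It then remains to raise to the $q$-th power, send $n\to\infty$, and finally $q\to\infty$. For the lower half of the sandwich, using $\limsup_n n\PP(f_0>u_n)\le\tau_2$, the lower Bonferroni bound together with the elementary inequality $(1+x/q)^q\ge e^x(1-O(x^2/q))$ yields $\liminf_n\PP(M_n\le u_n)\ge e^{-\tau_2}$ after letting $q\to\infty$. For the upper half, using $\liminf_n n\PP(f_0>u_n)\ge\tau_1$, the upper Bonferroni bound, the pair-sum control, and $(1+x/q)^q\le e^x$ give
\begin{align*}
\limsup_{n\to\infty}\PP(M_n\le u_n)\;\le\;\limsup_{q\to\infty}e^{-\tau_1+g_q(r)}\;\le\;e^{g(r)-\tau_1},
\end{align*}
by the hypothesis $\limsup_q g_q(r)\le g(r)$. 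The main technical point I foresee is bookkeeping with the two levels of limsup/liminf: the mixing error $q\alpha(n,l_n)$ and the gap error must be driven to $0$ \emph{before} the $q$-limit is taken, so the order of limits ($n\to\infty$ first, then $q\to\infty$) is forced, and the Bonferroni estimates must be paired with the correct side of the hypothesis \eqref{eq:uplowbound2} on each side of the sandwich.
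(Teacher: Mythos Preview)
Your proposal is correct and follows precisely the approach the paper intends: the paper does not give a detailed proof of Theorem~\ref{thm:341} but simply states that it is ``essentially similar to the proof of Theorem~\ref{thm:341orig}'' (i.e., Theorem~3.4.1 in \cite{LGH}), which is the block-decomposition/Bonferroni argument you outline. The only adaptation needed---tracking $\tau_1,\tau_2$ separately on the two sides and carrying $g_q(r)$ through to the final $q\to\infty$ limit---is exactly what you describe, and your order of limits ($n\to\infty$ then $q\to\infty$) is the correct one.
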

The proof of Theorem \ref{thm:341} is essentially similar to the proof of Theorem \ref{thm:341orig}.
Notice that we also made a weakening of the assumption that $n\PP(f_0>u_n)\to\tau$. This is to accommodate cases where the limit cannot be determined or does not exist.

Until now we have assumed the existence of scaling sequences $a_n$ and $b_n$ such that the limit of $\PP(M_n\leq a_n r+b_n)$ is non-degenerate. However, such scaling sequences do not necessarily exist. In the case of Theorem \ref{thm:341orig}, the assumption that $n\PP(f_0>u_n)\to\tau$ provides the most straightforward way to determine if suitable $a_n$ and $b_n$ exist and, if this is the case, what they are. Namely, we see that if the limit function $\tau$ is either zero or infinity, then the limit in \eqref{eq:limdist} becomes a degenerate distribution. Thus in order to obtain a non-degenerate limit, we must choose $a_n$ and $b_n$ such that the limit $n\PP(f_0>u_n)\to\tau$ is non-trivial. In specific cases  writing out the expression for $n\PP(f_0>u_n)$ often provides an easy way to see how $a_n$ and $b_n$ must be chosen in order for the limit to exist and be non-trivial. 

Similarly for Theorem \ref{thm:341}, if $\tau_2=\infty$ then we get a trivial lower bound on the $\liminf$ of $\PP(M_n\leq u_n)$. So again, by looking at the expression for $n\PP(f_0>u_n)$ we can often see how $a_n$ and $b_n$ must be chosen for the upper bound on the $\limsup$ to be less than infinity.

%
%

\section{EVT for random walks in a general setting}
\label{MainRes}
In this section we define random walks on a general probability space with a group action. In this general setting we show how the averaging operator can be used to prove extreme value distributions and logarithm laws for random walks. First we introduce the setup and define notation.

\subsection{Notation and setup}
\label{NotAndSet}
Let $(\TX,m)$ denote a probability space and $G$ a group acting measurably on $\TX$ preserving $m$. Fix also some probability measure $\mu$ on $G$. The product space $G^{\times \N}$ naturally inherits the product measure $\mu^{\otimes \N}$ which is also a probability measure.

We define the probability space $(\TY,\PP)$ by
\begin{align*}
(\TY,\PP):=(G^{\times \N} \times \TX\,,\, \mu^{\otimes\N}\otimes m).
\end{align*}
We denote elements in $G^{\times\N}$ by $\gb$ and write these as $\gb=(g_1,\dots,g_i,\dots)$.

By a random walk on $\TX$ generated by $G$ we mean a sequence of the form $X_i=g_i\cdots g_1 x$ where $x\in \TX$ has distribution $m$ and the $g_j\in G$ have distribution $\mu$. Define the map $L^i:G^{\times \N} \to G$ by $L^i(\gb)=g_i\cdots g_1$. Then for each $i$, $L^i(\gb)x$ represents the $i$'th position of the random walk along the path $\gb$ starting at $x$. We use the convention that $L^0(\gb)=e$, i.e. the neutral element in $G$. We define the sequence of random variables $X_i:\TY\to \TX$ by
\begin{align*}
X_i(\gb,x)=L^i(\gb)x.
\end{align*}
We see that $\TY$ can be thought of as the space of all possible random walks on $\TX$. 
Let $\Delta: \TX\to\R$. We define the sequence of real random variables $\xi_i: \TY\to \R$ by
\begin{align*}
\xi_i(\gb,x)=\Delta(L^i(\gb)x).
\end{align*}
and define a new sequence of random variables $M_n:\TY\to\R$ by 
\begin{align*}
M_n(\gb,x):=\max_{0\leq i<n}\xi_i(\gb,x).
\end{align*}
It follows from $G$-invariance of $m$ that $\xi_i$ is a stationary sequence with respect to $\PP$. Stationarity in particular implies that the random variables are identically distributed and we let $F(r)$ denote the common distribution function of the $\xi_i$.

We denote by
\begin{align*}
G^i=G\cdots G=\tub{g_1\cdots g_i\;:\;g_j\in G, \;\forall\; 1\leq j\leq i}.
\end{align*}
The natural probability measure on $G^i$ is the convolution measure defined as the push-forward measure of $\mu^{\otimes i}$ under the map $L^i$. That is $\mu^{*i}=\mu^{\otimes \N}((L^i)^{-1})$. It is a useful observation that
\begin{align*}
\int_{G^{\times\N}} f(L^i(\gb))\,d\mu^{\otimes \N}(\gb)=\int_{G^i} f(g)\,d\mu^{*i}(g)=\int_{G}\cdots\int_{G}f(g_i\cdots g_1)d\mu(g_i)\cdots d\mu(g_1),
\end{align*}
for any function $f:G^i\to\R$.

\subsubsection{Averaging operator}
As previously mentioned, the so-called averaging operator plays a very important role in this work. Denote by $A:L^2(\TX,m)\to L^2(\TX,m)$ the averaging operator with respect to $G$ given by 
\begin{align*}
Af=\int_G f(g x) \,d\mu(g),
\end{align*}
where $f\in L^2(\TX, m)$. We get the $n$'th iterate of $A$ by straight forward calculation, this is
\begin{align*}
A^n f=\int_{G^n} f(g'x)\,d\mu^{*n}(g').
\end{align*}
Since $m$ is $G$-invariant we also get 
\begin{align}
\label{eq:Ainv}
\int_\TX Af\,dm=\int_\TX\int_G f(g x) \,d\mu(g) dm(x)=\int_G \int_\TX f(g x) \,dm(x) d\mu(g)=\int_\TX f\,dm.
\end{align}
Notice that $A$ is linear.
\begin{defn}[]
\label{Def:SpectralGap}
We say that the averaging operator has spectral gap in $L^2(\TX,m)$ if there exists constants $\lambda\in (0,1)$ and $c_0>0$ such that for all $f\in L^2(\TX,m)$ and all $n\in\N$
\begin{align}
\label{eq:SGdef}
\norm{A^n f-\int_\TX f\;dm}_2\leq c_0\lambda^n\norm{f}_2.
\end{align}
\end{defn}

\subsubsection{Distance-like functions}

We are going to introduce two types of function $\Delta$ that we are interested in. For this we need the \textit{tail distribution function} of $\Delta$. We define this as
\begin{align*}
\Phi_{\Delta}(z)=m\para{\tub{x:\Delta(x)\geq z}}\quad,\quad z\in \R.
\end{align*}
Notice that
\begin{align*}
\PP(\xi_0>u_n)=\Phi_{\Delta}(u_n).
\end{align*} 
\begin{defn}[]
\label{DistanceLike}
For $k>0$, we say that $\Delta$ is $k$-DL ("Distance-Like") if it is continuous and satisfies
\begin{align}
\label{eq:def1a}
\exist v_1, v_2>0 \;\text{ such that }\; v_1 e^{-kz}\leq \Phi_{\Delta}(z)\leq v_2 e^{-kz}\enskip,\enskip\foral z\in \R.
\end{align}
For $k>0$, we say that $\Delta$ is $k$-SDL ("Strong-Distance-Like") if it is continuous and satisfies
\begin{align}
\label{eq:def1b}
\exist v_1>0 \;\text{ such that }\; \Phi_{\Delta}(z)=v_1e^{-kz}+o(e^{-kz}) \quad\text{ as } z\to\infty.
\end{align}
\end{defn}
\noindent The notion of distance-like functions was introduced in \cite{KleinMarg}.

Throughout the paper we will make use of big O notation as well as Vinogradov symbols when appropriate. So for a set $S$ and functions $f,g$ on $S$ we write $f(s)=O(g(s))$ if there exists a constant $c$ such that $\num{f(s)}\leq c\num{g(s)}$ for all $s\in S$. We sometimes write $f(s)\ll g(s)$ meaning the same as $f(s)=O(g(s))$ and we write $f(s)\asymp g(s)$ if $f(s)\ll g(s)$ and $g(s)\ll f(s)$.

%
%

\subsection{Bounds on the limiting distribution of $M_n$}

\begin{thm}[]
\label{thm:Res1}
Assume that $\Delta$ is $k$-DL for some $k>0$ and that $A$ has spectral gap on $L^2(\TX,m)$. Set $u_n=r+\frac{1}{k}\log n$. Then for all $r\in\R$
\begin{align*}
e^{-v_2e^{-kr}}\leq\liminf_{n\to\infty} \PP(M_n\leq u_n)\leq \limsup_{n\to\infty} \PP(M_n\leq u_n)\leq e^{\theta_\lambda e^{-kr}},
\end{align*}
for 
\begin{align*}
\theta_{\lambda}=\frac{\lambda}{1-\lambda}c_0v_2-v_1,
\end{align*}
where $v_1,v_2$ and $c_0,\lambda$ are the constants from Definition \ref{DistanceLike} and \ref{Def:SpectralGap} respectively.
\end{thm}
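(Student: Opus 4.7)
The plan is to apply Theorem \ref{thm:341} to the stationary sequence $\xi_i$, so the three tasks are: (i) compute the asymptotics of $n\PP(\xi_0>u_n)$ using the $k$-DL hypothesis, (ii) verify Condition $D(u_n)$ using the spectral gap, and (iii) verify Condition $D_{g(r)}'(u_n)$ with an explicit $g(r)$, also via the spectral gap. Step (i) is immediate: $\PP(\xi_0>u_n)=\Phi_\Delta(u_n)$, and plugging $u_n=r+\tfrac1k\log n$ into \eqref{eq:def1a} gives
\begin{align*}
v_1 e^{-kr}\le\liminf_{n\to\infty} n\PP(\xi_0>u_n)\le\limsup_{n\to\infty}n\PP(\xi_0>u_n)\le v_2 e^{-kr},
\end{align*}
so we can take $\tau_1=v_1e^{-kr}$ and $\tau_2=v_2e^{-kr}$.

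For Condition $D(u_n)$, I would write the joint distribution as an integral against $m$ using the averaging operator. Let $\phi_n=\1{\{\Delta\le u_n\}}$ and, for indices $0\le i_1<\dots<i_p<j_1<\dots<j_{p'}<n$ with $l:=j_1-i_p$, define
\begin{align*}
\Psi(y)=\phi_n(y)\int_{G^{j_{p'}-j_1}}\prod_{t=2}^{p'}\phi_n\bigl(c_{j_t-j_1}\cdots c_1 y\bigr)\,d\mu^{*(j_{p'}-j_1)}.
\end{align*}
Conditioning on $X_{i_p}$ and integrating out the independent increments $g_{i_p+1},\dots,g_{j_1}$ that bridge the two blocks shows that the joint distribution factors through $A^l\Psi$:
\begin{align*}
F_{\xi_{i_1},\dots,\xi_{i_p},\xi_{j_1},\dots,\xi_{j_{p'}}}(u_n)=\int_{\TY}\prod_{s=1}^{p}\phi_n(X_{i_s})\cdot (A^l\Psi)(X_{i_p})\,d\PP,
\end{align*}
while stationarity and $G$-invariance of $m$ give $F_{\xi_{j_1},\dots,\xi_{j_{p'}}}(u_n)=\int_\TX\Psi\,dm$. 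Subtracting, using $|\prod_s\phi_n|\le 1$, Cauchy--Schwarz, $\|\Psi\|_2\le 1$, and the spectral gap bound \eqref{eq:SGdef} yields
\begin{align*}
\Bigl| F_{\xi_{i_1},\dots,\xi_{j_{p'}}}(u_n)-F_{\xi_{i_1},\dots,\xi_{i_p}}(u_n)F_{\xi_{j_1},\dots,\xi_{j_{p'}}}(u_n)\Bigr|\le c_0\lambda^l.
\end{align*}
Thus $\alpha(n,l)=c_0\lambda^l$ works, and any $l_n$ with $l_n\to\infty$ and $l_n/n\to 0$ (e.g.\ $l_n=\lceil(\log n)^2\rceil$) proves $D(u_n)$.

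For Condition $D_{g(r)}'(u_n)$, let $\chi_n=\1{\{\Delta>u_n\}}$ so that $\PP(\xi_0>u_n,\xi_j>u_n)=\int_\TX\chi_n\cdot A^j\chi_n\,dm$. Writing $A^j\chi_n=\int\chi_n\,dm+(A^j\chi_n-\int\chi_n\,dm)$, applying Cauchy--Schwarz and spectral gap, and using $\|\chi_n\|_2^2=\Phi_\Delta(u_n)\le v_2 e^{-kr}/n$, I get
\begin{align*}
\PP(\xi_0>u_n,\xi_j>u_n)\le \Phi_\Delta(u_n)^2+c_0\lambda^j\,\Phi_\Delta(u_n).
\end{align*}
Summing $j=1,\dots,[n/q]$ and multiplying by $n$ bounds the sum by $\frac{1}{q}(v_2 e^{-kr})^2+\frac{c_0\lambda}{1-\lambda}v_2 e^{-kr}$. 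Letting first $n\to\infty$ and then $q\to\infty$ gives the valid choice $g(r)=\frac{c_0\lambda}{1-\lambda}v_2 e^{-kr}$. Plugging $\tau_1,\tau_2,g$ into Theorem \ref{thm:341} yields precisely the two stated bounds with $\theta_\lambda=\frac{\lambda}{1-\lambda}c_0 v_2-v_1$.

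The only delicate step is the factorization for $D(u_n)$: one must be careful to identify which block of $g_i$'s get integrated into $\Psi$, which into the averaging operator $A^l$ sandwiched between the two blocks, and how stationarity recovers $\int\Psi\,dm$ as the marginal for the later block. Once that bookkeeping is in order, the spectral gap does all the work, and Theorem \ref{thm:341} combines the three estimates into the result.
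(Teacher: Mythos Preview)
Your proposal is correct and follows essentially the same route as the paper: Lemma~\ref{lem:ScalSeq} is your step~(i), Lemma~\ref{lem:unD'} is your verification of $D_{g(r)}'(u_n)$ with the same $g(r)$, and Lemma~\ref{lem:D(u_n)} (together with the computation leading to \eqref{eq:SGfinal2}) is your verification of $D(u_n)$. The only cosmetic difference is that the paper packages the joint distribution via a recursive operator $E_{\bar n}^i$ and proves the general splitting estimate \eqref{eq:SGfinal2}, whereas you write the second block directly as a single function $\Psi$ and apply the spectral gap to $A^l\Psi$; both arguments reduce to the same Cauchy--Schwarz/spectral-gap step and the same bound $c_0\lambda^l$.
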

\begin{remark}
We see that for $\lambda$ close to 1, we get $\thelam>0$ rendering the upper bound on the limiting distribution trivial. However, for small values of $\lambda$ we get $\thelam<0$, hence a non-trivial upper bound.
\end{remark}

Naturally, the strategy of the proof will be to verify the assumptions of Theorem \ref{thm:341}. We begin by determining the correct scaling sequences $a_n$ and $b_n$. Assume that $\Delta$ is a $k$-DL function. Then for all $n\in\N$
\begin{align*}
v_1 e^{-ku_n}\leq \Phi_{\Delta}(u_n)\leq v_2 e^{-ku_n},
\end{align*}
for constants $v_1,v_2>0$. Easily, we see that
\begin{align*}
v_1\liminf_{n\to\infty}\para{n e^{-ku_n}}\leq \liminf_{n\to\infty} n\Phi_{\Delta}(u_n)\leq\limsup_{n\to\infty} n\Phi_{\Delta}(u_n) \leq v_2 \limsup_{n\to\infty} \para{n e^{-ku_n}}.
\end{align*}
Since $\Phi_{\Delta}(u_n)=\PP(\xi_0>u_n)$, the upper bound on \eqref{eq:uplowbound2} will be non-trivial if we can find sequences $a_n$ and $b_n$ such that the limit of $ne^{-k(a_n r+b_n)}$ exists and is non-trivial. By writing $ne^{-k(a_n r+b_n)}=ne^{ka_n r}e^{kb_n}$ it is easy to see that for $a_n=1$ and $b_n=\frac1k\log n$ we get
\begin{align*}
\lim_{n\to\infty} ne^{-k(a_n r+b_n)}=e^{-kr}.
\end{align*}
Obviously for this choice of scaling sequences we also get a non-trivial lower bound. We formulate this conclusion as a lemma
\begin{lemma}[]
\label{lem:ScalSeq}
Suppose $\Delta$ is a $k$-DL function and set $u_n=r+\frac1k\log n$. Then
\begin{align*}
v_1e^{-kr}\leq \liminf_{n\to\infty} n\PP(\xi_0>u_n)\leq\limsup_{n\to\infty} n\PP(\xi_0>u_n) \leq v_2e^{-kr},
\end{align*}
where $v_1,v_2>0$ are the constants from Definition \ref{DistanceLike}.
\end{lemma}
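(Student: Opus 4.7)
The plan is essentially a direct substitution, already sketched in the paragraph preceding the statement; I would make it rigorous as follows. First, I would note that by the observation recorded right after Definition \ref{DistanceLike}, one has $\PP(\xi_0 > u_n) = \Phi_\Delta(u_n)$, so the problem reduces to bounding $n\Phi_\Delta(u_n)$ from above and below. Next, I would invoke the defining property \eqref{eq:def1a} of a $k$-DL function to obtain the two-sided tail estimate $v_1 e^{-k u_n} \leq \Phi_\Delta(u_n) \leq v_2 e^{-k u_n}$, valid for every $n \in \N$ (not just asymptotically), with $v_1, v_2 > 0$ the constants furnished by the hypothesis.

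The key step is then that with the prescribed choice $u_n = r + \frac{1}{k}\log n$, the quantity $n e^{-k u_n}$ simplifies to $e^{-kr}$ exactly, for every $n$, and not only in the limit. Multiplying the displayed tail estimate through by $n$ therefore yields the uniform sandwich $v_1 e^{-kr} \leq n\PP(\xi_0 > u_n) \leq v_2 e^{-kr}$ holding for all $n$, after which passing to $\liminf$ and $\limsup$ on both sides gives the claimed inequalities. I do not expect any genuine obstacle here: the content of the lemma is really the bookkeeping observation that the scaling sequences $a_n = 1$, $b_n = \frac{1}{k}\log n$ identified heuristically before the statement indeed produce a non-degenerate version of \eqref{eq:uplowbound2}, which is precisely the hypothesis needed to apply Theorem \ref{thm:341} in the subsequent proof of Theorem \ref{thm:Res1}.
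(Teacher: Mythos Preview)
Your proposal is correct and follows essentially the same approach as the paper: the paper's argument, given in the paragraph immediately preceding the lemma, likewise applies the $k$-DL bounds to $\Phi_{\Delta}(u_n)=\PP(\xi_0>u_n)$ and then observes that $ne^{-ku_n}=e^{-kr}$ for the choice $u_n=r+\frac{1}{k}\log n$. If anything, your version is marginally cleaner in noting that the sandwich $v_1 e^{-kr}\leq n\PP(\xi_0>u_n)\leq v_2 e^{-kr}$ holds for every $n$ before passing to the limit.
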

\begin{remark}
It follows immediately that if $\Delta$ is assumed to be $k$-SDL, then the lemma holds with the same choice of $u_n$.
\end{remark}
The next lemma verifies Condition $D'_{g(r)}(u_n)$ under the assumptions of Theorem \ref{thm:Res1}. 
\begin{lemma}[]
\label{lem:unD'}
Assume that $\Delta$ is $k$-DL for some $k>0$ and suppose $A$ has spectral gap in $L^2(\TX,m)$. Set $u_n=r+\frac{1}{k}\log n$. Then Condition $D_{g(r)}'(u_n)$ holds for $\xi_i$ with $g(r)=\frac{\lambda}{1-\lambda}c_0v_2e^{-kr}$, where $v_2$ and $c_0,\lambda$ are the constants from Definition \ref{DistanceLike} and \ref{Def:SpectralGap} respectively.
\begin{proof}
We can rewrite the joint probability of $\xi_0$ and $\xi_j$ in terms of integrals of characteristic functions. Set $W:=(u_n,\infty)$, $V_0=\tub{x\in \TX: \Delta(x)\in W}$ and $V_i^{\gb}=\tub{x\in\TX:\xi_i (\gb,x)\in W}$. Notice that $V_0=V_0^{\gb}$ and
\begin{align*}
\een_{V_i^{\gb}}(x)=\een_{V_0}(L^i(\gb)x).
\end{align*}
Then we get
\begin{align*}
\PP(\xi_0>u_n, \xi_j>u_n)&=\int_{G^{\times\N}}\int_\TX \een_{V_0^{\gb}\,\cap\,V_j^{\gb}}(x)\,dm(x)\,d\mu^{\otimes\N}(\gb)\\
&=\int_{G^{\times\N}}\int_\TX \een_{V_0}(x)\een_{V_0}(L^j(\gb)x)\,dm(x)\,d\mu^{\otimes\N}(\gb)\\
&=\int_\TX \een_{V_0}(x)\int_{G^{\times\N}}\een_{V_0}(L^j(\gb)x)\,d\mu^{\otimes\N}(\gb)\,dm(x)\\
&=\int_\TX \een_{V_0}(x)\int_{G^j}\een_{V_0}(g'x)\,d\mu^{*j}(g')\,dm(x)\\
&=\int_\TX \een_{V_0}(x)A^j\para{\een_{V_0}(x)}\,dm(x).
\end{align*}
Set $\psi:=\een_{V_0}$ to get
\begin{align*}
\PP(\xi_0>u_n, \xi_j>u_n)=\int_\TX \psi A^j(\psi)\,dm.
\end{align*}
Recall the Cauchy-Schwartz inequality stating that $\norm{fg}_1\leq \norm{f}_2\norm{g}_2$ for $f,g\in L^2(\TX,m)$. We proceed by estimating the difference $\vert \PP(\xi_0>u_n, \xi_j>u_n)-\PP(\xi_0>u_n)\PP(\xi_j>u_n)\vert$. Written in terms of integrals we have
\begin{gather}
\begin{split}
\label{eq:DeriveLemD'}
\left\vert \int_\TX \psi A^j (\psi)\,dm-\int_\TX \psi \,dm \int_\TX \psi \,dm \right\vert &= \left\vert\int_\TX \psi \para{A^j(\psi)-\int_\TX \psi \,dm} \,dm\right\vert\\
&\leq \int_\TX \left\vert\psi \para{A^j(\psi)-\int_\TX \psi \,dm}\right\vert \,dm\\
&= \norm{\psi \para{A^j(\psi)-\int_\TX \psi \,dm}}_1\\
&\leq \norm{\psi}_2\norm{A^j(\psi)-\int_\TX \psi\, dm}_2\\
&\leq c_0\lambda^j \norm{\psi}_2^2.
\end{split}
\end{gather}
The Cauchy-Schwartz inequality was used to get the second last inequality while the spectral gap property of $A$ was applied to get the final estimate.
It follows that
\begin{align*}
\PP(\xi_0>u_n,\xi_j>u_n)\leq \left(\int_\TX \psi \;dm\right)^2+c_0\lambda^j\norm{\psi}_2^2.
\end{align*}
Since $\psi$ is a characteristic function we know that
\begin{align*}
\norm{\psi}_2^2=\int_\TX \psi^2 \;dm=\int_\TX \psi \;dm.
\end{align*}
We also notice that
\begin{align*}
\int_\TX \psi \;dm=\Phi_{\Delta}(u_n).
\end{align*}
Using that $\Delta$ is $k$-DL and using that $u_n=r+\frac1k \log n$ we see that
\begin{align*}
\PP(\xi_0>u_n,\xi_j>u_n)\leq (v_2e^{-ku_n})^2+c_0\lambda^j v_2e^{-ku_n}=\frac{v_2^2}{n^2}e^{-2kr}+c_0\lambda^j \frac{v_2}{n}e^{-kr}.
\end{align*}
We do the summation from Condition $D'_{g(r)}(u_n)$ to get
\begin{align*}
 n\sum_{j=1}^{\left[\frac{n}{q}\right]}\PP(\xi_0>u_n,\xi_j>u_n)&\leq  n\sum_{j=1}^{\left[\frac{n}{q}\right]}\para{\frac{v_2^2}{n^2}e^{-2kr}+c_0\lambda^j \frac{v_2}{n}e^{-kr}}\\
&\leq \frac{v_2^2}{q}e^{-2kr}+c_0v_2e^{-kr}\sum_{j=1}^{\left[\frac{n}{q}\right]}\lambda^j.
\end{align*}
Recall that since $\lambda\in (0,1)$ we have $\sum_{j=1}^{\infty}\lambda^j=\frac{\lambda}{1-\lambda}$ so when we take the $\limsup_{n\to\infty}$ we get
\begin{align*}
g_q(r)=\limsup_{n\to\infty}n\sum_{j=2}^{\left[\frac{n}{q}\right]}\PP(\xi_0>u_n,\xi_j>u_n)\leq \frac{v_2^2}{q}e^{-2kr}+\frac{\lambda}{1-\lambda}c_0v_2e^{-kr}.
\end{align*}
Finally taking the $\limsup_{q\to\infty}$ gives
\begin{align*}
\limsup_{q\to\infty} g_q(r)\leq \frac{\lambda}{1-\lambda}c_0v_2e^{-kr}.
\end{align*}
So Condition $D'_{g(r)}(u_n)$ holds with $g(r)=\frac{\lambda}{1-\lambda}c_0v_2e^{-kr}$.
\end{proof}
\end{lemma}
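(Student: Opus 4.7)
The goal is to verify Condition $D'_{g(r)}(u_n)$, which requires controlling the tail sum $n\sum_{j=1}^{[n/q]} \PP(\xi_0>u_n,\xi_j>u_n)$. The plan is to pass from this joint probability, which a priori depends on an unwieldy integration over the path space $G^{\times \N}$, to a pairing of the form $\int_\TX \psi \cdot A^j \psi \, dm$ where $\psi$ is the characteristic function of the excursion set $V_0 = \{x : \Delta(x) > u_n\}$. This rewriting is the conceptual heart of the argument: stationarity plus the Fubini-style identity for the convolution measure $\mu^{*j}$ noted in the setup turns a statement about the random walk into one about iterates of the averaging operator on $L^2(\TX,m)$.

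First I would set $\psi := \en_{V_0}$ and compute, using $G$-invariance of $m$ and the definition $\xi_j(\gb,x) = \Delta(L^j(\gb)x)$, that
\begin{align*}
\PP(\xi_0>u_n,\xi_j>u_n) = \int_\TX \psi(x)\, A^j\psi(x) \, dm(x).
\end{align*}
Since $\int \psi\, dm = \Phi_\Delta(u_n)$, adding and subtracting this quantity inside the integral gives
\begin{align*}
\PP(\xi_0>u_n,\xi_j>u_n) = \Phi_\Delta(u_n)^2 + \int_\TX \psi\Bigl(A^j\psi - \int_\TX \psi\, dm\Bigr)\, dm.
\end{align*}
Cauchy--Schwarz bounds the remainder by $\norm{\psi}_2 \cdot \norm{A^j\psi - \int\psi\, dm}_2$, and then the spectral gap hypothesis (Definition \ref{Def:SpectralGap}) kicks in to yield the exponential decay $c_0\lambda^j \norm{\psi}_2^2$. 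Because $\psi$ is an indicator, $\norm{\psi}_2^2 = \int\psi\, dm = \Phi_\Delta(u_n)$, and the $k$-DL hypothesis lets me replace this by $v_2 e^{-ku_n} = v_2 e^{-kr}/n$.

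Substituting these two estimates and multiplying by $n$ yields two contributions to the sum over $j$. The quadratic-in-$\Phi_\Delta$ term contributes at most $n\cdot (n/q) \cdot (v_2 e^{-kr}/n)^2 = v_2^2 e^{-2kr}/q$, which vanishes as $q\to\infty$. The linear term contributes $c_0 v_2 e^{-kr}\sum_{j=1}^{[n/q]} \lambda^j$, which is bounded for all $n$ by the geometric series $c_0 v_2 e^{-kr} \cdot \lambda/(1-\lambda)$. Taking first $\limsup_{n\to\infty}$ and then $\limsup_{q\to\infty}$ therefore produces exactly $g(r) = \frac{\lambda}{1-\lambda}c_0 v_2 e^{-kr}$, as claimed.

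The main obstacle is conceptual rather than technical: recognizing that the joint probability along the walk can be rewritten as an $L^2$ pairing involving $A^j$, so that the spectral gap is available as a black box. Once that is in place, the remaining steps (Cauchy--Schwarz, the $k$-DL tail estimate, and summing a geometric series) are routine. One bookkeeping point to handle carefully is that the upper bound $v_2 e^{-ku_n}$ from the $k$-DL property must be plugged in both for $\norm{\psi}_2^2$ and for $\Phi_\Delta(u_n)^2$, and that the exponent in $e^{-kr}$ arising from the choice $u_n = r + \tfrac{1}{k}\log n$ distributes correctly across the two terms so that the final $q\to\infty$ limit isolates only the spectral-gap contribution.
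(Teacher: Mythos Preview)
Your proposal is correct and follows essentially the same route as the paper: rewrite $\PP(\xi_0>u_n,\xi_j>u_n)=\int_\TX \psi\,A^j\psi\,dm$, split off $\Phi_\Delta(u_n)^2$, bound the remainder via Cauchy--Schwarz and the spectral gap by $c_0\lambda^j\|\psi\|_2^2=c_0\lambda^j\Phi_\Delta(u_n)$, apply the $k$-DL upper bound, and sum the resulting geometric series. The only differences are cosmetic (you phrase the decomposition as ``add and subtract'' while the paper writes out the absolute-value chain), and your bookkeeping of the $e^{-kr}$ factors and the two limits is exactly as in the paper.
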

\begin{remark}
Notice that $g(r)$ vanishes as the spectral gap $\lambda$ goes to zero.
\end{remark}

\subsubsection{Verifying Condition $D(u_n)$}
\label{SecD}
To verify Condition $D(u_n)$ we need to rewrite the joint distribution function of the $\xi_i$ using the averaging operator. The idea is the same as the one we used to rewrite the joint distribution in the proof of Lemma \ref{lem:unD'}. Now we essentially do the same calculation in higher generality.

Throughout the following computation let $\bar{n}=(n_1,\dots,n_t)$ denote a fixed $t$-tuple of integers where $n_1<\dots<n_t$. Let $W=(-\infty, u_n]$ and again use the notation $V_0=\tub{x\in \TX: \Delta(x)\in W}$ and $V_i^{\gb}=\tub{x\in\TX:\xi_i (\gb,x)\in W}$ introduced in the proof of Lemma \ref{lem:unD'}. Furthermore, set  
\begin{align*}
\Lambda_{\bar{n}}&:=\tub{y\in \TY:\xi_{n_1}(y)\in W,\dots,\xi_{n_t}(y)\in W}\nonumber.
\end{align*}
Using this notation we rewrite the joint distribution function of $\xi_{n_1},\dots,\xi_{n_t}$ in terms of integrals of characteristic functions. 
\begin{align}
\label{JDder}
\PP(\Lambda_{\bar{n}})&=\int_{G^{\times \N}\times \TX} \een_{\Lambda_{\bar{n}}} \,d(\mu^{\otimes\N}\otimes m)\nonumber\\
&=\int_{G^{\times \N}}\int_\TX \prod_{i=1}^t\een_{V_{n_i}^{\gb}}(x) \,dm(x)\,d\mu^{\otimes\N}(\gb)\nonumber\\
&=\int_{G^{\times \N}}\int_\TX \prod_{i=1}^t\een_{V_0}(L^{n_i}(\gb)x)\,dm(x)\,d\mu^{\otimes\N}(\gb)\nonumber\\
&=\int_\TX\int_{G^{\times \N}} \prod_{i=1}^t\psi(L^{n_i}(\gb)x)\,d\mu^{\otimes\N}(\gb)\,dm(x),
\end{align}
where again $\psi:=\een_{V_0}$. It is practical to introduce the notation $g_{[i,j]}=g_i\cdots g_j$ for $i>j$. We now look at the integral with respect to $\mu^{\otimes\N}$ in \eqref{JDder}. We can rewrite this integral using the averaging operator in the following way. First we write
\begin{align}
\label{eq:AvCal1}
\int_{G^{\times \N}} \prod_{i=1}^t\psi(L^{n_i}(\gb)x)\,d\mu^{\otimes\N}(\gb)
&=\int_{G}\cdots\int_{G}\prod_{i=1}^t\psi(g_{n_i}\cdots g_1x)\,d\mu(g_{n_t})\cdots d\mu(g_1).
\end{align}
Now, on the right hand side of \eqref{eq:AvCal1}, look only at the integrals with respect to $g_{n_t},\dots,g_{n_{t-1}+1}$. We get
\begin{align*}
\int_{G}\cdots\int_{G}\prod_{i=1}^t&\psi(g_{n_i}\cdots g_1x)\,d\mu(g_{n_t})\cdots d\mu(g_{n_{t-1}+1})\\
&=\prod_{i=1}^{t-1}\psi(g_{[n_i,1]}x)\int_{G^{n_t-n_{t-1}}}\psi(g_{[n_t,n_{t-1}+1]}g_{[n_{t-1},1]}x)\,d\mu^{*(n_t-n_{t-1})}(g_{[n_t,n_{t-1}+1]})\\
&=\prod_{i=1}^{t-1}\psi(g_{[n_i,1]}x)A^{n_t-n_{t-1}}\psi(g_{[n_{t-1},1]}x).
\end{align*}
Inserting this in \eqref{eq:AvCal1} we get
\begin{align*}
\int_{G^{\times \N}} \prod_{i=1}^t&\psi(L^{n_i}(\gb)x)d\mu^{\otimes\N}(\gb)\\
&=\int_{G}\cdots\int_{G}\prod_{i=1}^{t-1}\psi(g_{[n_i,1]}x)A^{n_t-n_{t-1}}\psi(g_{[n_{t-1},1]}x)d\mu(g_{n_{t-1}})\cdots d\mu(g_1).
\end{align*}
We repeat this step by looking at the integrals in \eqref{eq:AvCal1} with respect to $g_{n_{t-1}},\dots,g_{n_{t-2}+1}$. These integrals, rewritten in terms of the averaging operator as done above, become
\begin{align*}
\prod_{i=1}^{t-2}\psi(g_{[n_i,1]}x)A^{n_{t-1}-n_{t-2}}\para{\psi(g_{[n_{t-2},1]}x)A^{n_t-n_{t-1}}\psi(g_{[n_{t-2},1]}x)}.
\end{align*}
Again we can insert this in \eqref{eq:AvCal1} and repeat the procedure. Doing this $t$ times eventually gives that the integral with respect to $\mu^{\otimes \N}$ in \eqref{JDder} is
\begin{align*}
A^{n_1}\para{\psi(x)A^{n_2-n_1}\para{\psi(x)\dots A^{n_t-n_{t-1}}\para{\psi(x)}}\dots}.
\end{align*}
By integrating again with respect to $m$ and applying \eqref{eq:Ainv} we finally get
\begin{align*}
\PP(\Lambda_{\bar{n}})&=\int_\TX A^{n_1}\para{\psi(x)A^{n_2-n_1}\para{\psi(x)\dots A^{n_t-n_{t-1}}\para{\psi(x)}}\dots}\,dm(x)\\
&=\int_\TX \psi(x)A^{n_2-n_1}\para{\psi(x)\dots A^{n_t-n_{t-1}}\para{\psi(x)}\dots}\,dm(x).
\end{align*}
We can simplify notation by defining the following sequence of operators. For the sequence $\bar{n}=(n_1,\dots,n_t)$ and the fixed function $\psi:=\een_{V_0}$ we define $E_{\bar{n}}^i:L^{\infty}(\TX,m)\to L^{\infty}(\TX,m)$ recursively by
\begin{align*}
&E_{\bar{n}}^1(\phi)=\phi,\\
&E_{\bar{n}}^i(\phi)=E_{\bar{n}}^{i-1}(\psi A^{n_i-n_{i-1}}(\phi)),
\end{align*}
where $\phi\in L^{\infty}(\TX,m)$.
Notice that $E_i$ is linear since $A$ is linear. Using this notation and setting $\phi=\psi$ we get
\begin{align}
\label{Plambda}
\PP(\Lambda_{\bar{n}})=\int_\TX E_{\bar{n}}^{t}(\psi) \,dm.
\end{align}
%
%
%
%

Having rewritten the joint distribution, we proceed by demonstrating how to apply the spectral gap property of the averaging operator. More explicitly, we look at how we can split \eqref{Plambda} into a product of two integrals at the cost of an error term when $A$ has spectral gap. Let $\bar{n}=(n_1,\dots,n_p,n_{p+1},\dots,n_t)$ and also set $\bar{q}=(n_1,\dots,n_p)$ and $\bar{s}=(n_{p+1},\dots,n_t)$. Again assume that $n_1<\dots<n_p<n_{p+1}<\dots<n_t$. We want to estimate the difference
\begin{align*}
\left\vert\PP(\Lambda_{\bar{n}})-\PP(\Lambda_{\bar{q}})\PP(\Lambda_{\bar{s}})\right\vert.
\end{align*}
Written as integrals this is
\begin{align*}
\left\vert \int_\TX E_{\bar{n}}^{t}(\psi)\,dm-\int_\TX E_{\bar{q}}^{p}(\psi)\,dm\int_\TX E_{\bar{s}}^{t-p}(\psi) \,dm\right\vert.
\end{align*}
Notice that
\begin{align*}
E_{\bar{n}}^{t}(\phi)=E_{\bar{q}}^{p}(\psi A^{n_{p+1}-n_p}(E_{\bar{s}}^{t-p}(\phi))).
\end{align*}
Assume now that $A$ has spectral gap in $L^2(\TX,m)$. Let $\sigma:=E_{\bar{s}}^{t-p}(\psi)$. Using the linearity of $E_{\bar{q}}^{p}$ we get 
\begin{gather}
\begin{split}
\label{SGapp}
\bigg\vert \int_\TX E_{\bar{n}}^{t}(\psi)&\,dm-\int_\TX E_{\bar{q}}^{p}(\psi)\,dm\int_\TX E_{\bar{s}}^{t-p}(\psi) \,dm\bigg\vert\\
&=\left\vert \int_\TX E_{\bar{q}}^{p}\left(\psi A^{n_{p+1}-n_p}(\sigma)\right)\; dm-\int_\TX E_{\bar{q}}^{p}\left(\psi \right)\left(\int_\TX \sigma\,dm\right)\,dm\right\vert\\
&=\left\vert \int_\TX E_{\bar{q}}^{p}\left(\psi A^{n_{p+1}-n_p}(\sigma)\right)\; dm-\int_\TX E_{\bar{q}}^{p}\left(\psi \int_\TX \sigma\,dm\right)\,dm\right\vert\\
&=\left\vert \int_\TX E_{\bar{q}}^{p}\left(\psi \left(A^{n_{p+1}-n_p}(\sigma)-\int_\TX \sigma\,dm\right)\right)\, dm \right\vert\\
&\leq\int_\TX \left\vert E_{\bar{q}}^{p}\left(\psi \left(A^{n_{p+1}-n_p}(\sigma)-\int_\TX \sigma\,dm\right)\right)\right\vert\,dm\\
&=\norm{E_{\bar{q}}^{p}\left(\psi \left(A^{n_{p+1}-n_p}(\sigma)-\int_\TX \sigma\,dm\right)\right)}_1.
\end{split}
\end{gather}
Continuing the calculation, set $\rho:=A^{n_{p+1}-n_p}(\sigma)-\int_\TX \sigma\,dm$. Also, recall the H\" older inequality for $L^{\infty}$ functions, that is, for $f\in L^1$ and $g\in L^{\infty}$ we have
\begin{align*}
\norm{fg}_1\leq \norm{f}_1\norm{g}_{\infty}.
\end{align*}
We then get
\begin{gather*}
\begin{split}
\norm{E_{\bar{q}}^{p}(\psi \rho)}_1&=\norm{\psi A^{n_2-n_1}(\dots \psi A^{n_p-n_{p-1}}(\psi \rho))}_1\\
&\leq\norm{\psi}_{\infty}\norm{A^{n_2-n_1}(\psi A^{n_3-n_2}(\dots \psi A^{n_p-n_{p-1}}(\psi \rho))}_1\\
&\leq\norm{\psi}_{\infty}\norm{\psi A^{n_3-n_2}(\dots \psi A^{n_p-n_{p-1}}(\psi \rho))}_1\\
&\,\text{  }\vdots\\
&\leq\norm{\psi}_{\infty}^{p-1}\norm{\psi \rho}_1\\
&\leq \norm{\psi}_{\infty}^{p}\norm{\rho}_1\\
&\leq\norm{\rho}_2.
\end{split}
\end{gather*}
Here we alternated between using the H\" older inequality to split into products of norms and equation \eqref{eq:Ainv} to get rid of the averaging operator. The last inequality holds since $\psi$ is a characteristic function on a probability space. We can now continue the calculation in \eqref{SGapp} by applying the spectral gap property of $A$:
\begin{align*}
\norm{E_{\bar{q}}^{p}(\psi \rho)}_1&=\norm{E_{\bar{q}}^{p}\left(\psi \left(A^{n_{p+1}-n_p}(\sigma)-\int_\TX \sigma\,dm\right)\right)}_1\\
&\leq \norm{\left(A^{n_{p+1}-n_p}(\sigma)-\int_\TX \sigma\,dm\right)}_2\\
&\leq c_0\lambda^{n_{p+1}-n_p}\norm{\sigma}_2\\
&\leq c_0\lambda^{n_{p+1}-n_p},
\end{align*}
since it is easily seen that $\norm{\sigma}_2\leq 1$. All together we have shown that
\begin{align}
\label{eq:SGfinal2}
\left\vert\PP(\Lambda_{\bar{n}})-\PP(\Lambda_{\bar{q}})\PP(\Lambda_{\bar{s}})\right\vert=O\para{\lambda^{n_{p+1}-n_p}}.
\end{align}

%
%

\begin{lemma}[]
\label{lem:D(u_n)}
If $A$ has spectral gap on $L^2(\TX,m)$ then Condition $D(u_n)$ holds for the sequence $\xi_i$ for any choice of scaling sequence $u_n$ and any choice of $\Delta$.
\begin{proof}
Set $W=(-\infty,u_n]$ such that for $\bar{n}=(n_1,\dots,n_t)$ we have
\begin{align*}
\PP(\Lambda_{\bar{n}})=\PP(\xi_{n_1}\leq u_n,\dots,\xi_{n_t}\leq u_n).
\end{align*}
We rewrite the distribution function using the averaging operator as demonstrated earlier. Let $1\leq n_1<\cdots<n_p<n_{p+1}<\cdots<n_t<n n$ be integers such that $n_{p+1}-n_p\geq l$. Set $\bar{n}=(n_1,\dots,n_p,n_{p+1},\dots,n_t)$, $\bar{q}=(n_1,\dots,n_p)$ and $\bar{s}=(n_{p+1},\dots,n_t)$. By definition
\begin{align*}
F_{\xi_{n_1},\dots,\xi_{n_p},\xi_{n_{p+1}},\dots,\xi_{n_t}}(r)=\PP(\xi_{n_1}\leq r,\dots,\xi_{n_p}\leq r, \xi_{n_{p+1}}\leq r\dots,\xi_{n_t}\leq r),
\end{align*}
which means that
\begin{align*}
\left\vert F_{\xi_{n_1},\dots,\xi_{n_p},\xi_{n_{p+1}},\dots,\xi_{n_t}}(u_n)-F_{\xi_{n_1},\dots,\xi_{n_p}}(u_n)F_{\xi_{n_{p+1}},\dots,\xi_{n_t}}(u_n) \right\vert 
\end{align*}
can be written as 
\begin{align}
\label{eq:Dconc}
\left\vert\PP(\Lambda_{\bar{n}})-\PP(\Lambda_{\bar{q}})\PP(\Lambda_{\bar{s}})\right\vert,
\end{align}
which by \eqref{eq:SGfinal2} is bounded by $O\para{\lambda^{n_{p+1}-n_{p}}}$. Since $\lambda^{n_{p+1}-n_{p}}\to 0$ for any sequence $l_n\to\infty$ satisfying $l_n\leq n_{p+1}-n_{p}$, we conclude that Condition $D(u_n)$ holds for any choice of $u_n$ and $\Delta$.
\end{proof}
\end{lemma}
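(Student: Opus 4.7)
The plan is to leverage the representation of joint distributions developed in Section~\ref{SecD}. For indices $0 \le n_1 < \dots < n_p < n_{p+1} < \dots < n_t < n$ with $n_{p+1} - n_p \ge l$, write $\bar n, \bar q, \bar s$ for the full sequence and its two halves, set $W = (-\infty, u_n]$, and $\psi = \een_{V_0}$. Then $\PP(\Lambda_{\bar n}) = \int_\TX E_{\bar n}^t(\psi)\,dm$, and verifying Condition $D(u_n)$ reduces to bounding $|\PP(\Lambda_{\bar n}) - \PP(\Lambda_{\bar q})\PP(\Lambda_{\bar s})|$ uniformly in the indices.

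The key identity is the recursive factorization $E_{\bar n}^t(\psi) = E_{\bar q}^p\bigl(\psi \cdot A^{n_{p+1} - n_p}\sigma\bigr)$ with $\sigma := E_{\bar s}^{t-p}(\psi)$. By linearity of $E_{\bar q}^p$, the difference to be estimated equals
\begin{align*}
\left| \int_\TX E_{\bar q}^p\!\left(\psi \cdot \left(A^{n_{p+1}-n_p}\sigma - \int_\TX \sigma\,dm\right)\right) dm \right|.
\end{align*}
I would next pass to the $L^1$-norm of the integrand and combine H\"older's inequality $\norm{fg}_1 \le \norm{f}_\infty \norm{g}_1$ with the $A$-invariance \eqref{eq:Ainv} to peel off the layers of $E_{\bar q}^p$ one at a time, obtaining $\norm{E_{\bar q}^p(\psi \rho)}_1 \le \norm{\psi}_\infty^{p} \norm{\rho}_1 \le \norm{\rho}_2$, where the last step is Cauchy--Schwarz against the constant function $1$ on the probability space. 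Applying the spectral gap to $\rho := A^{n_{p+1}-n_p}\sigma - \int_\TX \sigma\,dm$ then gives $\norm{\rho}_2 \le c_0 \lambda^{n_{p+1}-n_p}\norm{\sigma}_2$, while a parallel $L^\infty$ estimate (unfolding $E_{\bar s}^{t-p}$ through the same $\norm{\psi}_\infty \le 1$ and contraction of $A$ on $L^\infty$) shows $\norm{\sigma}_2 \le \norm{\sigma}_\infty \le 1$.

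Collecting the pieces yields $|\PP(\Lambda_{\bar n}) - \PP(\Lambda_{\bar q})\PP(\Lambda_{\bar s})| \le c_0\,\lambda^{n_{p+1}-n_p}$, uniformly in all indices, in $u_n$, and in $\Delta$ --- the latter two enter only through $\psi$, whose $L^\infty$-norm is trivially at most $1$. Setting $\alpha(n,l) := c_0 \lambda^l$ and choosing any $l_n \to \infty$ with $l_n/n \to 0$, for instance $l_n = \lfloor \sqrt{n} \rfloor$, delivers $\alpha(n, l_n) \to 0$ and establishes Condition $D(u_n)$.

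The step I expect to require the most care is the telescoping contraction estimate for $E_{\bar q}^p$: each layer demands alternately invoking H\"older's inequality (to extract an outer factor of $\psi$) and the invariance of $m$ under $A$ (to discard the associated $A^{n_i - n_{i-1}}$), and one must verify that the bookkeeping collapses cleanly without accumulating constants that depend on $p$ or on the individual gaps $n_i - n_{i-1}$. Once this contraction is in place the spectral gap closes the argument in a single line, and the statement follows without any dependence on the particular $u_n$ or $\Delta$.
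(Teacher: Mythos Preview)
Your proposal is correct and follows exactly the paper's approach: the derivation you outline is precisely the content of Section~\ref{SecD} culminating in \eqref{eq:SGfinal2}, which the paper's proof simply cites. The only cosmetic difference is that you make the choice $l_n = \lfloor\sqrt{n}\rfloor$ explicit, whereas the paper leaves $l_n$ as any sequence with $l_n\to\infty$ and $l_n/n\to 0$.
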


We can now conclude on the proof of Theorem \ref{thm:Res1}. In Lemma \ref{lem:ScalSeq} we determined that the inequalities in \eqref{eq:uplowbound2} are non-trivial for the scaling sequence $u_n=r+\frac1k \log n$ and in Lemma \ref{lem:unD'} we proved that Condition $D'_{g(r)}(u_n)$ is satisfied for $\xi_i$ with $g(r)=\frac{1}{1-\lambda}c_0v_2-v_1$. In Lemma \ref{lem:D(u_n)} we proved that Condition $D(u_n)$ is satisfied for $\xi_i$ for any choice of $u_n$ and $\Delta$. This means that all assumptions of Theorem \ref{thm:341} are satisfied and so Theorem \ref{thm:Res1} follows from Theorem \ref{thm:341}.
%

\begin{cor}
\label{cor:aRes}
For any $a\in\N$, Theorem \ref{thm:Res1} holds with $M_n$ substituted by $M_{n,a}=\max_{0\leq i<n}\xi_{ai}$ and
\begin{align}
\label{eq:a-thetalambda}
\theta_{\lambda}=\frac{\lambda^a}{1-\lambda^a}c_0v_2-v_1
\end{align}
\begin{proof}
Set $\eta_i=\xi_{ai}$ and fix $a\in\N$. First notice that $\xi_i$ being stationary implies that $\eta_i$ is stationary. This also means that the common distribution of $\xi_i$ and $\eta_i$ is the same and so nothing is changed in the proof of Lemma \ref{lem:ScalSeq}. The appropriate scaling sequence for $\eta_i$ is therefore also $u_n=r+\frac1k \log n$.

In Lemma \ref{lem:unD'} replace $j$ by $aj$ throughout the proof to obtain
\begin{align*}
g(r)=v_2e^{-kr}\sum_{j=1}^{\infty} \lambda^{aj}=\frac{\lambda^a}{1-\lambda^a}c_0v_2 e^{-kr}.
\end{align*}
In Lemma \ref{lem:D(u_n)}, equation \eqref{eq:Dconc} is bounded above by $\lambda^{n_{p+1}-n_p}$. The equivalent equation for $\eta_i$ is bounded by $\lambda^{a(n_{p+1}-n_p)}$ and so Condition $D(u_n)$ holds as well. Again all assumptions of Theorem \ref{thm:341} are satisfied and so the corollary follows from Theorem \ref{thm:341}.
\end{proof}
\end{cor}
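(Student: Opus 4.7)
\medskip

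\noindent\textbf{Proof plan for Corollary \ref{cor:aRes}.} The plan is to reduce the statement to Theorem \ref{thm:Res1} by re-running the arguments of Lemmas \ref{lem:ScalSeq}, \ref{lem:unD'} and \ref{lem:D(u_n)} for the subsampled sequence $\eta_i := \xi_{ai}$, checking only what changes under subsampling. The point is that nothing about the analytic ingredients is lost; the spectral gap estimate simply acquires factors of $a$ in the exponent, which is harmless for $D(u_n)$ and beneficial for $D'_{g(r)}(u_n)$.

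\emph{Step 1: stationarity and the scaling sequence.} First I would observe that because $\xi_i$ is stationary under $\PP$ (by $G$-invariance of $m$), its subsequence $\eta_i = \xi_{ai}$ is also stationary, with the same marginal distribution $F$. Consequently $\PP(\eta_0 > u_n) = \Phi_\Delta(u_n)$, so Lemma \ref{lem:ScalSeq} applies verbatim and the correct scaling sequence is again $u_n = r + \frac{1}{k}\log n$, with the same bounds $v_1 e^{-kr} \le \liminf n\PP(\eta_0 > u_n) \le \limsup n\PP(\eta_0 > u_n) \le v_2 e^{-kr}$.

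\emph{Step 2: Condition $D'_{g(r)}(u_n)$ for $\eta_i$.} Next I would reproduce the calculation of Lemma \ref{lem:unD'} with $\xi_j$ replaced by $\eta_j = \xi_{aj}$. The key identity
\[
\PP(\eta_0 > u_n, \eta_j > u_n) = \PP(\xi_0 > u_n, \xi_{aj} > u_n) = \int_\TX \psi \, A^{aj}(\psi)\, dm
\]
now feeds a spectral gap factor $\lambda^{aj}$ into the Cauchy--Schwartz bound, yielding
\[
\PP(\eta_0 > u_n, \eta_j > u_n) \le \frac{v_2^2}{n^2} e^{-2kr} + c_0 \lambda^{aj} \frac{v_2}{n} e^{-kr}.
\]
Summing over $1 \le j \le [n/q]$ and using $\sum_{j=1}^\infty \lambda^{aj} = \frac{\lambda^a}{1-\lambda^a}$ gives $g_q(r) \le \frac{v_2^2}{q} e^{-2kr} + \frac{\lambda^a}{1-\lambda^a} c_0 v_2 e^{-kr}$, and letting $q \to \infty$ establishes Condition $D'_{g(r)}(u_n)$ for $\eta_i$ with $g(r) = \frac{\lambda^a}{1-\lambda^a} c_0 v_2 e^{-kr}$. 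This is exactly the modification recorded in \eqref{eq:a-thetalambda}.

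\emph{Step 3: Condition $D(u_n)$ for $\eta_i$, and conclusion.} For the mixing condition I would simply note that if $1 \le n_1 < \dots < n_p < n_{p+1} < \dots < n_t$ are the indices used for $\eta_i$, then the underlying $\xi$-indices are $a n_1 < \dots < a n_p < a n_{p+1} < \dots < a n_t$ and the separation $a(n_{p+1} - n_p)$ appears in \eqref{eq:SGfinal2}. Hence the joint-probability difference is $O(\lambda^{a(n_{p+1} - n_p)})$, and any sequence $l_n \to \infty$ bounding $n_{p+1} - n_p$ from below still drives this error to zero. Therefore Condition $D(u_n)$ holds for $\eta_i$ for every choice of $u_n$ and $\Delta$. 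All three hypotheses of Theorem \ref{thm:341} are thus verified for $\eta_i$, and the corollary follows, with the stated formula for $\theta_\lambda$ read off from Step 2.

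\emph{Main obstacle.} There is no substantive obstacle: the argument is a bookkeeping exercise on the exponents of $\lambda$. The only thing worth being careful about is confirming that subsampling does not spoil the joint-distribution rewriting of Section \ref{SecD}; since the recursive operator $E^i_{\bar n}$ depends only on the gaps $n_i - n_{i-1}$, replacing these gaps by $a(n_i - n_{i-1})$ is transparent and the entire chain of estimates culminating in \eqref{eq:SGfinal2} carries over without modification.
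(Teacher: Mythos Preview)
Your proposal is correct and follows essentially the same approach as the paper's proof: subsample to $\eta_i=\xi_{ai}$, observe stationarity preserves Lemma~\ref{lem:ScalSeq}, rerun Lemma~\ref{lem:unD'} with $j\mapsto aj$ to obtain $g(r)=\frac{\lambda^a}{1-\lambda^a}c_0v_2e^{-kr}$, note the $D(u_n)$ bound becomes $\lambda^{a(n_{p+1}-n_p)}$, and conclude via Theorem~\ref{thm:341}. Your remark about the operator $E^i_{\bar n}$ depending only on gaps is a nice explicit justification of a point the paper leaves implicit.
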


\subsection{Proving Theorem \ref{thm:[Lattice2]} in the general setting}
\begin{thm}[]
\label{cor:Res2}
Assume that $\Delta$ is $k$-SDL for some $k>0$ and that $A$ has spectral gap on $L^2(\TX,m)$. Let $\tub{m_j}$ be a subsequence in $\N$ such that $\tub{m_{j+1}-m_j}$ is strictly increasing. Also, let $\alpha_n<\beta_n$ denote sequences in $\N$ such that $\alpha_n\to\infty$ and $N_n:=\beta_n-\alpha_n\to\infty$. Then for $u_n=r+\frac1k\log {N_n}$ we have
\begin{align*}
\lim_{n\to\infty}\PP\left(\max_{\alpha_n\leq j\leq \beta_n}(\xi_{m_j})\leq u_n\right)=e^{-v_1e^{-kr}},
\end{align*}
where $v_1>0$ is the constant from Definition \ref{DistanceLike}.
\end{thm}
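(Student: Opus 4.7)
My plan is to verify Condition $D(u_n)$ and the full (unweakened) Condition $D'(u_n)$ for the triangular array $\eta_j^{(n)}:=\xi_{m_{\alpha_n+j}}$, $0\leq j\leq N_n$, and then invoke (the triangular-array adaptation of) Theorem~\ref{thm:341orig}. The required scaling follows immediately from the $k$-SDL hypothesis: inserting $u_n=r+\tfrac1k\log N_n$ into $\Phi_\Delta(u_n)=v_1e^{-ku_n}+o(e^{-ku_n})$ yields $N_n\PP(\xi_0>u_n)\to v_1e^{-kr}$, identifying the target $\tau=v_1e^{-kr}$.

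Condition $D(u_n)$ comes essentially for free. The calculation of Section~\ref{SecD} applies to the array after replacing indices $n_i$ by $m_{\alpha_n+j_i}$, and the bound \eqref{eq:SGfinal2} controls the splitting error by $O(\lambda^{m_{\alpha_n+j_{p+1}}-m_{\alpha_n+j_p}})$. Since $\{m_{j+1}-m_j\}$ is strictly increasing in the positive integers, every gap $m_{j+1}-m_j$ with $j\geq\alpha_n$ is at least $\alpha_n$ (up to an additive constant), so the splitting error is uniformly $O(\lambda^{\alpha_n})\to 0$. Thus $D(u_n)$ holds with $l_n\equiv 1$.

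The decisive step is $D'(u_n)$. Using stationarity of $(\xi_i)$ to shift the base index, the relevant quantity becomes
\begin{align*}
N_n\sum_{j=1}^{[N_n/q]}\PP\para{\xi_0>u_n,\,\xi_{m_{\alpha_n+j}-m_{\alpha_n}}>u_n},
\end{align*}
and the key estimate from the proof of Lemma~\ref{lem:unD'}, namely $\PP(\xi_0>u_n,\xi_l>u_n)\leq \Phi_\Delta(u_n)^2+c_0\lambda^l\Phi_\Delta(u_n)$, bounds this above by
\begin{align*}
\frac{N_n^2}{q}\,\Phi_\Delta(u_n)^2+c_0\,N_n\Phi_\Delta(u_n)\sum_{j=1}^{\infty}\lambda^{m_{\alpha_n+j}-m_{\alpha_n}}.
\end{align*}
The first term converges to $(v_1e^{-kr})^2/q$, which vanishes as $q\to\infty$. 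For the second, telescoping the strictly increasing gaps yields $m_{\alpha_n+j}-m_{\alpha_n}\geq j\alpha_n$, so the tail sum is bounded by $\lambda^{\alpha_n}/(1-\lambda^{\alpha_n})\to 0$ as $\alpha_n\to\infty$. This gives $D'(u_n)$ in its strong form, not merely the weakened $D'_{g(r)}(u_n)$.

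The main obstacle is precisely this upgrade from $D'_{g(r)}(u_n)$ to $D'(u_n)$: in Theorem~\ref{thm:Res1} the unit-step geometric tail produced the non-vanishing constant $\theta_\lambda$, and only the sparsification hypotheses on $\{m_j\}$ (strictly increasing gaps) and on $\alpha_n$ (diverging to infinity) conspire to kill it. Once both mixing conditions and the scaling are secured, the Leadbetter blocking argument carries through — triangular-array indexing causes no essential difficulty because stationarity of $(\xi_i)$ guarantees that all $\xi_{m_j}$ share the common marginal $F$ — and produces $\lim_{n\to\infty}\PP\para{\max_{\alpha_n\leq j\leq\beta_n}\xi_{m_j}\leq u_n}=e^{-\tau}=e^{-v_1e^{-kr}}$, as required.
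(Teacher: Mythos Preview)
Your proposal is correct but takes a genuinely different route from the paper. The paper does not verify $D(u_n)$ and $D'(u_n)$ for a triangular array and then invoke an EVT theorem; instead it computes the joint probability directly. Iterating the splitting estimate \eqref{eq:SGfinal2} one coordinate at a time, it obtains
\[
\PP(\Lambda_{\bar m(n)})=\Bigl(\int_\TX\psi\,dm\Bigr)^{N_n}+O\Bigl(\sum_{i=\alpha_n}^{\beta_n-2}\lambda^{m_{i+1}-m_i}\Bigr),
\]
identifies the main term as $e^{-v_1e^{-kr}}$ via a short preliminary lemma (Lemma~\ref{lem:PLambn}) that uses only the $k$-SDL asymptotic, and observes that the strictly increasing gaps force the error sum to be $O(\lambda^{m_{\alpha_n+1}-m_{\alpha_n}})\to 0$. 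Your approach and the paper's exploit the same mechanism --- increasing gaps together with $\alpha_n\to\infty$ make the spectral-gap error vanish rather than accumulate into the constant $\theta_\lambda$ --- but the paper applies it directly to the joint distribution, while you apply it inside the verification of $D'(u_n)$. The paper's argument is fully self-contained and only a few lines; yours is conceptually natural (it makes transparent exactly why Theorem~\ref{thm:Res1} upgrades to an exact limit under these hypotheses) but rests on a triangular-array extension of Theorem~\ref{thm:341orig}, since the array $\eta_j^{(n)}=\xi_{m_{\alpha_n+j}}$ is not stationary in $j$. That extension is standard in the EVT literature, but it is neither stated nor proved in the present paper.
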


We first prove a lemma.
\begin{lemma}[]
\label{lem:PLambn}
Suppose $\Delta$ is $k$-SDL for some $k>0$ and let $u_n=r+\frac1k\log n$. Then
\begin{align*}
\lim_{n\to\infty}\PP(\xi_0\leq u_n)^n=e^{-v_1e^{-k r}}.
\end{align*}
\begin{proof}
Notice that $\PP(\xi_0\leq u_n)=1-\Phi_{\Delta}(u_n)$. Using that $\Delta$ is $k$-SDL and $u_n=r+\frac1k\log n$ we get  
\begin{align}
(1-\Phi_{\Delta}(u_n))^n&=\para{1-(n^{-1} v_1e^{-kr})-o(n^{-1})}^n\nonumber\\
&=e^{n\log\para{1-(n^{-1} v_1e^{-kr})-o(n^{-1})}}.\label{eq:6}
\end{align}
We approximate $\log\para{1-(n^{-1} v_1e^{-kr})-o(n^{-1})}$ by its second order Taylor expansion around 0 to get
\begin{align*}
\log\para{1-(n^{-1} v_1e^{-kr})-o(n^{-1})}=-n^{-1} v_1e^{-kr}+o(n^{-1}).
\end{align*}
Inserting this in \eqref{eq:6} we get
\begin{align*}
(1-\Phi_{\Delta}(u_n))^n=e^{v_1e^{-kr}+o(1)}.
\end{align*}
Taking limits gives
\begin{align*}
\lim_{n\to\infty}(1-\Phi_{\Delta}(u_n))^n= e^{-v_1e^{-kr}}.
\end{align*}
\end{proof}
\end{lemma}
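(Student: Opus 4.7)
The plan is entirely elementary: I would directly substitute the $k$-SDL asymptotics of the tail distribution into $(1-\Phi_\Delta(u_n))^n$ and recover the limit via a first-order Taylor expansion of the logarithm. No mixing or spectral-gap input is required, since the claim concerns only the marginal law of $\xi_0$, which by stationarity has distribution governed entirely by $\Phi_\Delta$.

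Concretely, first I would rewrite $\PP(\xi_0\leq u_n)=1-\PP(\xi_0>u_n)=1-\Phi_\Delta(u_n)$. By the $k$-SDL hypothesis, $\Phi_\Delta(z)=v_1 e^{-kz}+o(e^{-kz})$ as $z\to\infty$, and because $u_n=r+\frac{1}{k}\log n\to\infty$ while $e^{-ku_n}=e^{-kr}/n$, this specialises to $\Phi_\Delta(u_n)=\frac{v_1 e^{-kr}}{n}+o(n^{-1})$. Hence $(1-\Phi_\Delta(u_n))^n$ equals $\exp\bigl(n\log\bigl(1-\tfrac{v_1 e^{-kr}}{n}-o(n^{-1})\bigr)\bigr)$. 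Expanding via $\log(1-x)=-x+O(x^2)$ for $x\to 0$ gives $n\log\bigl(1-\tfrac{v_1 e^{-kr}}{n}-o(n^{-1})\bigr)=-v_1 e^{-kr}+o(1)$, and exponentiating and taking the limit yields $e^{-v_1 e^{-kr}}$.

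The only point requiring minor care is the bookkeeping of the $o(e^{-kz})$ error term from the $k$-SDL definition: by definition this means $(\Phi_\Delta(z)-v_1 e^{-kz})/e^{-kz}\to 0$, so evaluated at $z=u_n$ the residual term is $\epsilon_n\cdot e^{-ku_n}=\epsilon_n e^{-kr}/n$ with $\epsilon_n\to 0$, which contributes only $\epsilon_n e^{-kr}=o(1)$ after multiplication by $n$. No step here is a genuine obstacle; the lemma is a standard exponential-law calculation of the form $(1+a_n/n)^n\to e^a$ whenever $a_n\to a$, and it will later be combined with the spectral-gap machinery to obtain the joint asymptotic in Theorem~\ref{cor:Res2}.
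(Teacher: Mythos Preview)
Your proposal is correct and follows essentially the same approach as the paper: rewrite $\PP(\xi_0\le u_n)=1-\Phi_\Delta(u_n)$, substitute the $k$-SDL asymptotics to get $\Phi_\Delta(u_n)=v_1 e^{-kr}/n+o(n^{-1})$, take logarithms, Taylor-expand, and pass to the limit. Your added remark on tracking the $o(e^{-kz})$ term and the recognition of the standard $(1+a_n/n)^n\to e^a$ pattern are exactly the right justifications.
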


%
%
%
%

\begin{proof}[Proof of Theorem \ref{cor:Res2}]
Let $W=(-\infty,u_n]$ and $\bar{m}(n)=(m_{\alpha_n},m_{\alpha_n+1},\dots,m_{\beta_n -1})$. Using the notation from section \ref{SecD} we get that
\begin{align*}
\PP\left(\max_{\alpha_n\leq j< \beta_n}(\xi_{m_j})\leq u_n\right)=\PP(\xi_{m_{\alpha_n}}\leq u_n,\dots,\xi_{m_{\beta_n-1}}\leq u_n)=\PP(\Lambda_{\bar{m}(n)}).
\end{align*}
Set $\bar{q}=(m_{\alpha_n})$ and $\bar{s}=(m_{\alpha_n+1},\dots, m_{\beta_n-1})$. Recall that $\psi=\een_{V_0}$. It then follows from \eqref{eq:SGfinal2} that
\begin{align}
\label{eq:Cor1}
\PP(\Lambda_{\bar{m}(n)})=\para{\int_\TX \psi\,dm}\PP(\Lambda_{\bar{s}})+O\para{\lambda^{m_{\alpha_n+1}-m_{\alpha_n}}}.
\end{align}
Now set $\bar{q}_1=(m_{\alpha_n+1})$ and $\bar{s}_1=(m_{\alpha_n+2},\dots, m_{\beta_n-1})$. We then apply \eqref{eq:SGfinal2} again to get
\begin{align*}
\PP(\Lambda_{\bar{s}})= \para{\int_\TX \psi\,dm}\PP(\Lambda_{\bar{s}_1})+O\para{\lambda^{m_{\alpha_n+2}-m_{\alpha_n+1}}}.
\end{align*}
Inserting this in \eqref{eq:Cor1} while using that $\int_\TX \psi \,dm\leq 1$ gives
\begin{align*}
\PP(\Lambda_{\bar{m}(n)})= \para{\int_\TX \psi\,dm}^2\PP(\Lambda_{\bar{s}_1})+O\para{\lambda^{m_{\alpha_n+2}-m_{\alpha_n+1}}+\lambda^{m_{\alpha_n+1}-m_{\alpha_n}}}.
\end{align*}
Repeating this process $\beta_n-\alpha_n$ times eventually gives
\begin{align}
\label{eq:Cor2}
\PP(\Lambda_{\bar{m}(n)})= \para{\int_\TX \psi\,dm}^{(\beta_n-\alpha_n)} +O\para{\sum_{i=\alpha_n}^{\beta_n-2} \lambda^{m_{i+1}-m_i}}.
\end{align}
Recall the notation $N_n=\beta_n-\alpha_n$ and notice that $\int_\TX \psi \,dm=\PP(\xi_0\leq u_n)$. Since $N_n\to\infty$ for $n\to\infty$ it follows from Lemma \ref{lem:PLambn} that 
\begin{align*}
\lim_{n\to\infty}\para{\int_\TX \psi\,dm}^{N_n}= e^{-v_1e^{-kr}}.
\end{align*}
Also, as $\lambda\in(0,1)$ and $\tub{m_{i+1}-m_i}$ is strictly increasing, we see that 
\begin{align*}
\sum_{i=\alpha_n}^{\beta_n -2}\para{\lambda^{m_{i+1}-m_i}}=O(\lambda^{m_{\alpha_n+1}-m_{\alpha_n}})\to 0,\quad\text{for }\; n\to\infty.
\end{align*}
So taking limits in \eqref{eq:Cor2} gives
\begin{align*}
\lim_{n\to\infty}\PP(\Lambda_{\bar{m}(n)})= e^{-v_1e^{-kr}}.
\end{align*}
\end{proof}

\subsection{Logarithm law for random walks}

\begin{cor}[]
\label{cor:Res3}
Assume that $\Delta$ is $k$-DL for some $k>0$ and that $A$ has spectral gap on $L^2(\TX,m)$.
Then for $\PP$-a.e. $y\in \TY$ we have
\begin{align*}
\limsup_{n\to\infty}\frac{\xi_n(y)}{\log n}=\frac1k.
\end{align*}
\begin{proof}
We prove $\limsup_{n\to\infty}\frac{\xi_n(y)}{\log n}\leq \frac1k$ and $\limsup_{n\to\infty}\frac{\xi_n(y)}{\log n}\geq \frac1k$ for $\PP$-a.e. $y\in \TY$.

The proof of the upper bound is an application of the classical Borel-Cantelli Lemma. For completeness we give the proof. Recall the Borel-Cantelli Lemma stating that for any sequence $A_n\subset\TY$ we have that
\begin{align*}
\sum_{n=1}^{\infty} \PP(A_n)<\infty\;\;\Rightarrow\;\; \PP(\tub{y\in \TY:y\in A_n \text{ for infinitely many }n})=0.
\end{align*}
Let $\epsilon>0$ be given. We look at the sequence of sets
\begin{align*}
A_n=\left\{(\gb,x):\xi_n(\gb,x)\geq \left(\frac1k+\epsilon\right)\log n\right\}.
\end{align*}
Since $\xi_n$ is stationary we have that 
\begin{align*}
\PP(A_n)&=\PP\left(\xi_n\geq \left(\frac1k+\epsilon\right)\log n\right)\\
&=\PP\left(\xi_0\geq \left(\frac1k+\epsilon\right)\log n\right)\\
&=m\left(x:\Delta(x)\geq \left(\frac1k+\epsilon\right)\log n\right)\\
&=\Phi_{\Delta}\left(\left(\frac1k+\epsilon\right)\log n\right).
\end{align*}
Since $\Delta$ is $k$-DL we get
\begin{align*}
\PP(A_n)\leq v_2e^{-k\left(\frac1k+\epsilon\right)\log n}=\frac{v_2}{n^{1+k\epsilon}}
\end{align*}
where $v_2>0$ is the constant from Definition \ref{DistanceLike}. So $\sum_{n=1}^{\infty}\PP(A_n)\leq v_2\sum_{n=1}^{\infty} \frac{1}{n^{1+k\epsilon}}<\infty$ implying that for $\PP$-a.e. $y\in \TY$, the inequality
\begin{align*}
\xi_n(y)\geq \left(\frac1k+\epsilon\right)\log n
\end{align*}
only holds true for finitely many $n$. So by taking the $\limsup_{n\to\infty}$ and dividing by $\log n$ we get
\begin{align*}
\limsup_{n\to\infty}\frac{\xi_n(y)}{\log n}\leq \frac1k+\epsilon.
\end{align*}
Since this holds true for every $\epsilon>0$ we have proved the desired inequality for $\PP$-a.e. $y\in \TY$.

We now prove the lower bound.
Assume for contradiction that the lower bound does not hold, i.e. assume that there exists $\epsilon>0$ such that
\begin{align*}
\PP\left(\limsup_{n\to\infty}\frac{\xi_n}{\log n}\leq \frac1k-\epsilon\right) >\epsilon.
\end{align*}
Let 
\begin{align*}
B=\tub{\limsup_{n\to\infty}\frac{\xi_n}{\log n}\leq \frac1k-\epsilon}
\end{align*}
For each $y\in B$ we can find sufficiently large $n_0\in\N$ such that
\begin{align*}
\sup_{j>n_0}\frac{\xi_{j}}{\log j}\leq \frac1k-\frac{\epsilon}{2}
\end{align*}
This implies that
\begin{align*}
B\subset \bigcup_{n_0\geq 1} \tub{\sup_{j>n_0}\frac{\xi_{j}}{\log j}\leq\frac1k-\frac{\epsilon}{2}}
\end{align*}
Since $\PP(B)>\epsilon$ there must be some $n_1\in\N$ for which
\begin{align}
\label{eq:logdelta}
\PP\para{\tub{\sup_{j>n_1}\frac{\xi_{j}}{\log j}\leq\frac1k-\frac{\epsilon}{2}}}>\delta
\end{align}
for some $\delta>0$.
For any $n_2\geq n_1$ and any $a\in\N$ we have
\begin{align}
\label{eq:Log1}
\delta<\PP\left(\max_{n_1\leq j< n_2}\frac{\xi_j}{\log j}\leq \frac1k-\frac{\epsilon}{2}\right)&\leq\PP\left(\max_{n_1\leq j< n_2}\frac{\xi_j}{\log n_2}\leq \frac1k-\frac{\epsilon}{2}\right)\nonumber\\
&\leq \PP\left(\max_{n_1\leq j< \kpara{\frac{n_2}{a}}}\para{\xi_{aj}}\leq \para{\frac1k-\frac{\epsilon}{2}}\log n_2\right).
\end{align}
We now apply Corollary \ref{cor:aRes} with the goal of obtaining the opposite inequality. For any $a\in\N$ we have
\begin{align*}
\limsup_{n\to\infty} \PP\left(\max_{0\leq j< n}(\xi_{aj})\leq r+\frac1k\log n\right)\leq  e^{\thelam e^{-kr}}.
\end{align*}
where $\thelam=\frac{\lambda^a}{1-\lambda^a}c_0v_2-v_1$. For simplicity we make a change of variables. Set $r=\frac{1}{k}\log s$ where $s\in (0,\infty)$. Then
\begin{align*}
\limsup_{n\to\infty} \PP\left(\max_{0\leq j< n}(\xi_{aj})\leq \frac{1}{k}\log(sn)\right)\leq e^{\thelam s^{-1}}.
\end{align*}
Pick $a\in\N$ sufficiently large to ensure that $\thelam<0$. Let $\delta>0$ be as in \eqref{eq:logdelta}. Then for $s>0$ sufficiently small we get that $e^{\thelam s^{-1}}<\frac{\delta}{2}$. Also by picking $n\in\N$ sufficiently large we get
\begin{align*}
\PP\left(\max_{0\leq j< n}(\xi_{aj})\leq  \frac{1}{k}\log(sn)\right)< e^{\thelam s^{-1}}+\frac{\delta}{2}<\delta.
\end{align*}
Since $\xi_{aj}$ is stationary, we see that
\begin{align*}
\PP\left(\max_{n_1\leq j< n_1+n}(\xi_{aj})\leq  \frac{1}{k}\log(sn)\right)< \delta.
\end{align*}
Since \eqref{eq:Log1} holds for any $n_2\geq n_1$ we can set $n_2:=a(n_1+n)$. Inserting this in \eqref{eq:Log1} gives
\begin{align*}
\PP\left(\max_{n_1\leq j< n_1+n}\para{\xi_{aj}}\leq \para{\frac1k-\frac{\epsilon}{2}}\log (n_1+n)\right)>\delta.
\end{align*} 
Set $n_3:=n_1+n$. It is a simple calculation to show that if we choose $n$ large enough we get
\begin{align*}
\para{\frac1k-\frac{\epsilon}{2}}\log (n_3)< \frac{1}{k}\log(sn).
\end{align*}
This inequality implies the following sequence of inequalities,
\begin{align*}
\delta&<\PP\left(\max_{n_1\leq j< n_3}\para{\xi_{aj}}\leq \para{\frac1k-\frac{\epsilon}{2}}\log (n_3)\right)
\leq \PP\left(\max_{n_1\leq j< n_3}(\xi_{aj})\leq  \frac{1}{k}\log(sn)\right)<\delta,
\end{align*}
which is a contradiction.

%
\end{proof}
\end{cor}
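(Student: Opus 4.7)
The plan is to prove separately that $\limsup_{n\to\infty} \xi_n(y)/\log n \leq 1/k$ and $\limsup_{n\to\infty} \xi_n(y)/\log n \geq 1/k$ for $\PP$-a.e.\ $y\in \TY$. The upper bound should be a routine Borel--Cantelli computation. For fixed $\epsilon > 0$ I would set $A_n = \{y : \xi_n(y) > (1/k + \epsilon)\log n\}$; stationarity gives $\PP(A_n) = \Phi_{\Delta}((1/k+\epsilon)\log n)$, and the $k$-DL hypothesis bounds this above by $v_2 n^{-1-k\epsilon}$, which is summable in $n$. Borel--Cantelli then forces $\xi_n(y) \leq (1/k + \epsilon)\log n$ eventually, $\PP$-almost surely, and letting $\epsilon$ run through a countable sequence tending to $0$ completes this direction.

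For the lower bound I would argue by contradiction using Corollary~\ref{cor:aRes}. Suppose that $\PP(\limsup_n \xi_n/\log n \leq 1/k - \epsilon) > \epsilon$ for some $\epsilon > 0$. Since that set is the countable increasing union of tail events $\{\sup_{j>n_1}\xi_j/\log j \leq 1/k - \epsilon/2\}$, countable subadditivity produces some $n_1 \in \N$ and $\delta > 0$ for which the latter event has probability strictly greater than $\delta$. Passing to the sparse subsequence at spacing $a$ and restricting the window to $n_1 \leq j < n_1 + n$ then yields $\PP(\max_{n_1 \leq j < n_1 + n} \xi_{aj} \leq (1/k - \epsilon/2)\log(n_1+n)) > \delta$ for every $a, n \in \N$.

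On the other hand, Corollary~\ref{cor:aRes} delivers $\limsup_n \PP(M_{n,a} \leq r + (1/k)\log n) \leq e^{\thelam e^{-kr}}$ with $\thelam = \lambda^a c_0 v_2/(1 - \lambda^a) - v_1$. I would first pick $a$ large enough to force $\thelam < 0$, then rewrite $u_n = (1/k)\log(sn)$ with $s = e^{kr}$, and choose $s$ so small that $e^{\thelam s^{-1}} < \delta/2$. Stationarity of $\xi_{aj}$ shifts the window from $[0,n)$ to $[n_1, n_1 + n)$ without changing the probability, and a direct $\log$-comparison shows $(1/k - \epsilon/2)\log(n_1 + n) < (1/k)\log(sn)$ once $n$ is large enough. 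The two estimates trap the same probability strictly above and strictly below $\delta$, producing the contradiction.

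The main obstacle is the lower bound. The issue is that the bare EVT output of Theorem~\ref{thm:Res1} can be useless: if the spectral gap exponent $\lambda$ is close to $1$, the constant $\thelam$ may be non-negative and the upper bound $e^{\thelam e^{-kr}}$ becomes trivial, so the contradiction would not close. It is precisely the sparse corollary, with $\lambda$ replaced by $\lambda^a$, that allows $\thelam$ to be driven as negative as one pleases by taking $a$ large, and this sparsification is what makes the lower-bound argument go through.
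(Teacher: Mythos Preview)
Your proposal is correct and follows essentially the same route as the paper's own proof: Borel--Cantelli for the upper bound, and for the lower bound a contradiction argument that passes to the sparse subsequence $\xi_{aj}$, invokes Corollary~\ref{cor:aRes} with $a$ large enough to force $\thelam<0$, shifts the window by stationarity, and closes via the $\log$-threshold comparison. Your identification of the sparsification step as the key device for making the EVT upper bound nontrivial is exactly the point the paper exploits.
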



\section{Proofs of main results}
\label{IntroProofs}

At this stage we are almost done with the proofs of the main results concerning maximal excursions and shortest vectors. The only part that remains is to combine the results of the previous section with known results from other papers.

For the closest returns on the torus we still need some additional arguments specific to this setup.

\subsubsection{Proofs of main results for shortest vectors on the space of unimodular lattices}
In the setup of Subsection \ref{SecRes2} it was proven by Kleinbock and Margulis \cite{KleinMarg} (Proposition 7.1) that $\Delta(x)$ as defined in \eqref{eq:ShortestVec} is $d$-SDL. In the proof of the same proposition the explicit value of the constant $w$ is derived as well. Furthermore, we know from Shalom \cite{Sh1} (Theorem C) that in the same setup the averaging operator has spectral gap in $L^2$. Notice that the theorem applies to $\tyk{L}_d$ since we can identify the space with $\drdz$.
So Theorem \ref{thm:[Lattice1]}, Theorem \ref{thm:[Lattice2]} and Corollary \ref{cor:[Lattice]} follow from Corollary \ref{cor:aRes}, Theorem \ref{cor:Res2} and Corollary \ref{cor:Res3} respectively. Using the $d$-SDL property of $\Delta$ and \eqref{eq:a-thetalambda} we see that
\begin{align*}
w(a)=\frac{\lambda^a}{1-\lambda^a}c_0w-w,
\end{align*}
so indeed, $w(a)\to w$ for $a\to\infty$.

\subsubsection{Proofs of main results for maximal excursions on homogeneous spaces}
In the setup of Subsection \ref{SecRes3} it was also proven by Kleinbock and Margulis \cite{KleinMarg} (Proposition 5.1) that $\Delta(x)=\dd(x,x_0)$ is a $k$-DL function for some $k>0$. The spectral gap property of the averaging operator in $L^2$ in this setup also follows from Shalom \cite{Sh1} (Theorem C).
So Theorem \ref{thm:[Cusp]} and Corollary \ref{cor:[Cusp]} follow from Corollary \ref{cor:aRes} and Corollary \ref{cor:Res3} respectively.

%
%

\subsection{Proofs of main results for closest returns on the torus}

We recall the setup of Theorem \ref{thm:[Torus]}. Let $\TX=\T^d$ equipped with Lebesque measure $m$ and Euclidian metric $\dd$. Also, let $G=\textup{Aut}(\T^d)$ equipped with a probability measure $\mu$. Assume that there is no $G_{\mu}$-invariant factor torus $\overline{T}$ such that the projection of $G_{\mu}$ on $\textup{Aut}(\overline{T})$ is amenable. We know from Bekka and Guivarc'h \cite{BG2} (Theorem 5) that the averaging operator has spectral gap in $L^2(\TX,m)$.

Let $x_0\in \TX$ be a fixed point and set $\Delta(x)=-\log\dd(x,x_0)$. The random variables $\xi_i$ are then given by
\begin{align*}
\xi_{i}(\gb,x)=-\log\dd(L^i(\gb)x,x_0).
\end{align*}
The strategy for proving Theorem \ref{thm:[Torus]} is to verify the assumptions of Theorem \ref{thm:341orig}. Notice that Lemma \ref{lem:D(u_n)} verifies Condition $D(u_n)$ for $\xi_i$ with any choice of $u_n$ and $\Delta$.
This means that we are left with the task of determining the scaling sequence $u_n$ such that the limit of $n\PP(\xi_0>u_n)$ is non-trivial and, for this $u_n$, verifing Condition $D'(u_n)$.

First we determine $u_n$. Let $B_r(x_0)\subset \TX$ denote the ball of radius $r$ at $x_0$ and $V_d$ the volume of the unit ball in $\R^d$. Then
\begin{align*}
n\PP(\xi_0>u_n)
&=n\,m\para{\tub{-\log\dd(x,x_0)> u_n}}\\
&=n\,m\para{B_{e^{-u_n}}(x_0)}.
\end{align*}
As in the case of Lemma \ref{lem:ScalSeq} we set $u_n=r+\frac1d\log n$. Since $\TX$ is locally Euclidian we get that for sufficiently large $n$,
%
%
\begin{align*}
n\,m\para{B_{e^{-u_n}}(x_0)}=nV_d e^{-du_n}=V_d e^{-dr},
\end{align*}
and taking limits we get
\begin{align*}
\lim_{n\to\infty} n\PP(\xi_0>u_n)=V_d e^{-dr}.
\end{align*}
Again, we collect this conclusion in a lemma.
\begin{lemma}[]
\label{lem:ScalSeq2}
Set $u_n=r+\frac1d\log n$. Then for the stationary sequence $\xi_i$ we have
\begin{align*}
\lim_{n\to\infty} n\PP(\xi_0>u_n)=V_d e^{-dr}.
\end{align*}
\end{lemma}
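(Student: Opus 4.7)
The plan is to exploit the definition of $\xi_0$ together with the locally Euclidean geometry of the torus, which makes the small-ball volume exactly computable.

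First I would rewrite the event $\{\xi_0>u_n\}$ purely in terms of the space $\TX$. By stationarity, the distribution of $\xi_0$ under $\PP$ agrees with the distribution of $-\log\dd(\,\cdot\,,x_0)$ under $m$, so
\begin{align*}
\PP(\xi_0>u_n)=m\bigl(\{x\in\T^d:-\log\dd(x,x_0)>u_n\}\bigr)=m\bigl(B_{e^{-u_n}}(x_0)\bigr).
\end{align*}
Since $u_n=r+\frac1d\log n\to\infty$ as $n\to\infty$, the ball radius $e^{-u_n}$ tends to $0$.

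Next I would use that $\T^d$ is locally isometric to $\R^d$ with Lebesgue measure. Fix a radius $\rho_0>0$ smaller than the injectivity radius at $x_0$ (on the flat torus one may take $\rho_0=\frac12$). For every $n$ large enough that $e^{-u_n}<\rho_0$, the ball $B_{e^{-u_n}}(x_0)$ is isometric to a Euclidean ball of the same radius, and hence
\begin{align*}
m\bigl(B_{e^{-u_n}}(x_0)\bigr)=V_d\,e^{-d u_n}.
\end{align*}
Substituting $u_n=r+\frac1d\log n$ gives $e^{-d u_n}=n^{-1}e^{-dr}$, so for all such $n$
\begin{align*}
n\,\PP(\xi_0>u_n)=n\cdot V_d\cdot n^{-1}e^{-dr}=V_d\,e^{-dr}.
\end{align*}
Taking $n\to\infty$ yields the claimed limit.

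There is essentially no obstacle: the whole point is that the exponent $\frac1d$ in $u_n$ is exactly tuned so that the leading-order term in the small-ball asymptotic on a $d$-dimensional manifold cancels the factor $n$. The only minor subtlety worth noting in the write-up is the choice of $n_0$ beyond which $e^{-u_n}$ lies below the injectivity radius; for smaller $n$ the identity may fail, but since we are taking a limit this is harmless.
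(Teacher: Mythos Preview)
Your proof is correct and follows essentially the same approach as the paper: rewrite $\PP(\xi_0>u_n)$ as the $m$-measure of a ball of radius $e^{-u_n}$ about $x_0$, use the locally Euclidean structure of $\T^d$ to identify this with $V_d e^{-du_n}$ for large $n$, and observe that the choice $u_n=r+\frac1d\log n$ makes $n\,m(B_{e^{-u_n}}(x_0))=V_d e^{-dr}$ exactly. Your explicit mention of the injectivity radius is a slight elaboration, but the argument is identical in substance.
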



Having determined $u_n$, we proceed to verify Condition $D'(u_n)$. This is the step which requires the most work. Fix $\delta\in (0,1)$. Recall the Hardy-Littlewood maximal operator $M$, which for a function $f:\TX\to\C$ is given by
\begin{align*}
Mf(x):=\sup_{R\in (0,\delta)}\frac{1}{m(B_R(x))}\int_{B_R(x)}|f(y)|\,dm(y).
\end{align*}
The Hardy-Littlewood maximal inequality then states that for any $f\in L^1(\TX)$ we have
\begin{align*}
m(\tub{x:Mf(x)>\beta})=O\para{\beta^{-1}\norm{f}_1}
\end{align*}
for every $\beta>0$. This version of the Hardy-Littlewood maximal inequality for functions on $\T^d$ follows easily from the classical version for functions on $\R^d$. Let 
\begin{align*}
E_i=\tub{(\gb,x)\,:\,\dd(L^i(\gb)x,x)<\frac1s}.
\end{align*}
The next lemma gives sufficient assumptions for Condition $D'(u_n)$ to hold.
\begin{lemma}[]
\label{lem:D'}
Suppose that for constants $\alpha\in (0,d)$ and $\kappa>0$ we have that for all $s>0$,
\begin{align}
\label{eq:TorusEst}
\sum_{i=1}^{\kpara{s^{\alpha}}} \;\PP(E_i)= O\para{s^{-\kappa}}.
\end{align}
Then Condition $D'(u_n)$ holds for $\xi_i$ and $u_n(r)=r+\frac1d \log n$ for a.e. $x_0\in \TX$.
\begin{proof}
Using that $\PP=\mu^{\otimes\N}\otimes m$ we can rewrite the estimate as
\begin{align}
\label{eq:est}
\sum_{i=1}^{\kpara{s^{\alpha}}} \int_{G^{\times\N}} m\left(\tub{x\in \TX\,:\,\dd(L^i(\gb)x,x)<\frac1s}\right) d\mu^{\otimes\N}(\gb)\leq \frac{B}{s^{\kappa}}.
\end{align}
Define the function $\Psi_s:\TX\to \R$ by
\begin{align*}
\Psi_s(x)=\sum_{i=1}^{\kpara{s^{\alpha}}} \int_{G^{\times \N}} \een_{\tub{{x\in \TX\,:\,\dd(L^i(\gb)x,x)<\frac1s}}}(x)\, d\mu^{\otimes \N} (\gb),
\end{align*}
and apply the Hardy-Littlewood maximal operator to $\Psi_s$ to get
\begin{align*}
M\Psi_s(x)=\sup_{R\in (0,\delta)}\frac{1}{m(B_R(x))}\int_{B_R(x)}\Psi_s(y)\; dm(y).
\end{align*}
Set $M\Psi_s(x):=\tyk{M}_s(x)$. Using \eqref{eq:est} and the Hardy-Littlewood maximal inequality we get that for every $\beta>0$
\begin{align*}
m\para{\tub{x:\tyk{M}_s(x)>\beta}}= O\para{\beta^{-1}\norm{\Psi_s}_1}=O\para{\beta^{-1} s^{-\kappa}}
\end{align*}
Let $\epsilon>0$. Set $\gamma=\frac{1+2\epsilon}{\kappa}$ and notice that $\gamma\kappa-\epsilon=1+\epsilon>1$. Let $n$ be an integer and substitute $s$ with $n^{\gamma}$ and set $\beta=n^{-\epsilon}$. Then
\begin{align*}
m\para{\tub{x:\tyk{M}_{n^{\gamma}}(x)>n^{-\epsilon}}}=O\para{n^{\epsilon-\gamma\kappa}}.
\end{align*}
Since $\epsilon-\gamma\kappa<-1$ we see that
\begin{align*}
\sum_{n=1}^{\infty}m\para{\tub{x:\tyk{M}_{n^{\gamma}}(x)>n^{-\epsilon}}}= O\para{\sum_{n=1}^{\infty}n^{\epsilon-\gamma\kappa}}<\infty.
\end{align*}
The classical Borel-Cantelli Lemma then tells us that for a.e. $x_0\in \TX$
\begin{align*}
x_0\notin \limsup_{n\to\infty}\tub{x:\tyk{M}_{n^{\gamma}}(x)>n^{-\epsilon}}.
\end{align*}
So there exists a number $N(x_0)$ such that for all $n\geq N(x_0)$ we have $\tyk{M}_{n^{\gamma}}(x_0)\leq n^{-\epsilon}$. That is
\begin{align*}
\tyk{M}_{n^{\gamma}}(x_0)=\sup_{R\in (0,\delta)}\frac{1}{m(B_R(x_0))}\int_{B_R(x_0)}\Psi_{n^{\gamma}}(x) \;dm(x)\leq n^{-\epsilon}.
\end{align*}
Choose $n$ so large that $\frac{1}{n^{\gamma}}\in (0,\delta)$ and set $R=\frac{1}{n^{\gamma}}$. Then
\begin{align*}
\int_{B_{n^{-\gamma}}(x_0)}\Psi_{n^{\gamma}}(x)\; dm(x)\leq \frac{V_d}{n^{\epsilon+\gamma d}}.
\end{align*}
We want to switch back to the real variable $s$ instead of the integer variable $n$ while preserving the inequality above. Let $s\in(n,n+1)$. On the right hand side of the inequality we can clearly substitute $n$ with $s-1$ and the inequality will still hold. The left hand side written out is 
\begin{align*}
\sum_{i=1}^{\kpara{n^{\gamma\alpha}}}\int_{G^{\times \N}}  m\left(B_{n^{-\gamma}}(x_0)\cap \tub{x:\dd(L^i(\gb)x,x)<\frac{1}{n^{\gamma}}}\right) d\mu^{\otimes \N}(\gb).
\end{align*}
We see that by changing $n$ to $s$ inside the integral, the measure of the intersection becomes smaller. However, to ensure that we are not summing over more terms we need to change $n$ to $s-1$ in the upper limit of the sum. All together we get
\begin{align}
\label{eq:HLest2}
\sum_{i=1}^{\kpara{(s-1)^{\gamma\alpha}}}\int_{G^{\times \N}}  m\left(B_{s^{-\gamma}}(x_0)\cap \tub{x:\dd(L^i(\gb)x,x)<\frac{1}{s^{\gamma}}}\right) d\mu^{\otimes \N}(\gb)
\leq \frac{V_d}{(s-1)^{\epsilon+\gamma d}}.
\end{align}
We aim to connect the left hand side of \eqref{eq:HLest2} with the sum in Condition $D'(u_n)$. To do this we derive as follows using the triangle inequality for the inclusion:
\begin{align*}
&\tub{x\in \TX\ssp \xi_0>u_n, \xi_i>u_n}\\
&=\tub{x\in \TX\ssp -\log \dd(x,x_0)>r+\frac1d\log n, \;-\log \dd(L^i(\gb)x,x_0)>r+\frac1d\log n}\\
&=\tub{x\in \TX\ssp \dd(x,x_0)\leq \frac{e^{-r}}{n^{\frac1d}},\; \dd(L^i(\gb) x,x_0)\leq \frac{e^{-r}}{n^{\frac1d}}}\\
&\subset\tub{x\in \TX\ssp\dd(x,x_0)\leq \frac{2e^{-r}}{n^{\frac1d}},\; \dd(L^i(\gb) x,x)\leq \frac{2e^{-r}}{n^{\frac1d}}}\\
&=\tub{x\in \TX\ssp \dd(x,x_0)\leq \frac{1}{l^{\gamma}},\; \dd(L^i(\gb) x,x)\leq \frac{1}{l^{\gamma}}},
\end{align*}
where $l:=\left(\frac{n^{\frac{1}{d}}}{2e^{-r}}\right)^{\frac{1}{\gamma}}$. Notice that the last line is exactly the set inside the integral in \eqref{eq:HLest2} above with $s$ substituted by $l$. Using this gives
\begin{align*}
\sum_{i=1}^{\kpara{(l-1)^{\gamma\alpha}}}&\PP(\xi_0>u_n,\xi_i>u_n)\\
&\leq \sum_{i=1}^{\kpara{(l-1)^{\gamma\alpha}}} \int_{G^{\times \N}} m\left( \dd(x,x_0)\leq \frac{1}{l^{\gamma}}, \dd(L^i(\gb)x,x_0)\leq \frac{1}{l^{\gamma}} \right)\;d\mu^{\otimes \N}(\gb)\\
&\leq \frac{V_d}{(l-1)^{\gamma d+\epsilon}}\\
&\asymp \frac{V_d}{l^{\gamma d+\epsilon}}.
\end{align*}
In the last line above we replaced $l-1$ with $l$ for notational simplicity. We can do this since we are only interested in the behavior as $n\to\infty$. Inserting the expression for $l$ gives
\begin{align*}
\frac{1}{l^{\gamma d+\epsilon}}=
\frac{(2e^{-r})^{(d+\frac{\epsilon}{\gamma})}}{n^{1+\frac{\epsilon}{\gamma d}}}=O\para{n^{-\frac{\epsilon}{\gamma d}-1}}.
\end{align*}
hence
\begin{align}
\label{eq:D'1}
n\sum_{i=1}^{\kpara{(l-1)^{\gamma\alpha}}}\PP(\xi_0>u_n,\xi_i>u_n)=O\para{n^{-\frac{\epsilon}{\gamma d}}}.
\end{align}
Since $(l-1)^{\gamma\alpha}=O(n^{\frac{\alpha}{d}})$ and $\frac{\alpha}{d}<1$, we see that for sufficiently large $n$, $\kpara{(l-1)^{\gamma\alpha}}\leq\kpara{\frac{n}{q}}$ for any $q\in\N$. This means that the left hand side of \eqref{eq:D'1} does not necessarily account for the entire quantity that we need to estimate to verify Condition $D'(u_n)$. To obtain this we need to add
\begin{align*}
n\sum_{i=\kpara{(l-1)^{\gamma\alpha}}+1}^{\kpara{\frac{n}{q}}}\PP(\xi_0>u_n,\xi_i>u_n)
\end{align*}
to the left hand side of equation \eqref{eq:D'1}. To find an upper bound on this sum we apply the averaging operator exactly like in Lemma \ref{lem:unD'}. This gives
\begin{align*}
\sum_{i=\kpara{(l-1)^{\gamma\alpha}}+1}^{\kpara{\frac{n}{q}}}\PP(\xi_0>u_n,\xi_i>u_n)\leq \sum_{i=\kpara{(l-1)^{\gamma\alpha}}+1}^{\kpara{\frac{n}{q}}}\para{\PP(\xi_0>u_n)^2+\PP(\xi_0>u_n)O(\lambda^i)}
\end{align*}
where $\lambda\in(0,1)$ comes from the spectral gap property of the averaging operator. From the proof of Lemma \ref{lem:ScalSeq2} we see that $\PP(\xi_0>u_n)=\frac1n V_d e^{-dr}$. Inserting this gives 
\begin{align*}
\sum_{i=\kpara{(l-1)^{\gamma\alpha}}+1}^{\kpara{\frac{n}{q}}}\PP(\xi_0&>u_n,\xi_i>u_n)\\
&\leq \para{\kpara{\frac{n}{q}}-\kpara{(l-1)^{\gamma\alpha}}}\para{\frac1n V_d e^{-dr}}^2+\frac1n V_d e^{-dr}\sum_{i=\kpara{(l-1)^{\gamma\alpha}}+1}^{\kpara{\frac{n}{q}}} O(\lambda^i)\\
&\leq \frac{n}{q}\para{\frac1n V_de^{-dr}}^2+\frac1n V_d e^{-dr}O(\lambda^{\kpara{(l-1)^{\gamma\alpha}}+1})\\
&= O\para{\frac{1}{qn} + \frac1n\lambda^{\kpara{(l-1)^{\gamma\alpha}}+1}}.
\end{align*}
Consequently,
\begin{align*}
n\sum_{i=\kpara{(l-1)^{\gamma\alpha}}+1}^{\kpara{\frac{n}{q}}} \PP(\xi_0>u_n,\xi_i>u_n)= O\para{q^{-1} + \lambda^{\kpara{(l-1)^{\gamma\alpha}}+1}}.
\end{align*}
Adding this to \eqref{eq:D'1} we get
\begin{align*}
n\sum_{i=1}^{\kpara{\frac{n}{q}}}\PP(\xi_0>u_n,\xi_i>u_n)= O\para{n^{-\frac{\epsilon}{\gamma d}}+q^{-1} + \lambda^{\kpara{(l-1)^{\gamma\alpha}}+1}}.
\end{align*}
Taking the $\limsup$ for $n\to\infty$ gives
\begin{align*}
\limsup_{n\to\infty} n\sum_{i=1}^{\kpara{\frac{n}{q}}}\PP(\xi_0>u_n,\xi_i>u_n)=O\para{q^{-1}},
\end{align*}
and finally by letting $q\to\infty$ we obtain
\begin{align*}
\limsup_{n\to\infty} n\sum_{i=1}^{\kpara{\frac{n}{q}}}\PP(\xi_0>u_n,\xi_i>u_n)\to 0\quad\text{for}\quad q\to\infty.
\end{align*}
We conclude that Condition $D'(u_n)$ has been established.
\end{proof}
\end{lemma}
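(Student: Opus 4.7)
The plan is to verify Condition $D'(u_n)$ by splitting the inner sum $\sum_{i=1}^{\kpara{n/q}}$ at a cutoff $i \asymp n^{\alpha/d}$: short-range indices are handled by the hypothesis combined with a Hardy--Littlewood maximal inequality, while long-range indices are controlled by the spectral gap estimate from the proof of Lemma \ref{lem:unD'}.

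Setting $R := e^{-u_n} = e^{-r} n^{-1/d}$, the triangle inequality gives
\begin{align*}
\tub{\xi_0 > u_n,\ \xi_i > u_n} \subset \tub{\dd(x,x_0) < R} \cap \tub{\dd(L^i(\gb) x, x) < 2R},
\end{align*}
so $\PP(\xi_0 > u_n, \xi_i > u_n)$ is bounded by the $\mu^{\otimes\N}$-integral of $m(B_R(x_0) \cap \tub{\dd(L^i(\gb) x, x) < 2R})$. The hypothesis \eqref{eq:TorusEst} controls the analogous quantity averaged over $x \in \TX$ rather than localized at $x_0$. To localize, I would introduce
\begin{align*}
\Psi_s(x) := \sum_{i=1}^{\kpara{s^\alpha}} \int_{G^{\times\N}} \een_{\tub{\dd(L^i(\gb)x, x) < 1/s}}\, d\mu^{\otimes\N}(\gb),
\end{align*}
so that $\norm{\Psi_s}_1 = O(s^{-\kappa})$, and apply the Hardy--Littlewood maximal inequality to get $m(\tub{M\Psi_s > \beta}) \ll \beta^{-1} s^{-\kappa}$. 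Evaluating at the discrete scales $s = n^\gamma$ and $\beta = n^{-\epsilon}$ with $\gamma := (1 + 2\epsilon)/\kappa$ renders the resulting probabilities summable in $n$, so the Borel--Cantelli lemma produces a full-measure set of $x_0$ for which, eventually, $M\Psi_{n^\gamma}(x_0) \leq n^{-\epsilon}$. Evaluating the maximal average at the ball radius $1/n^\gamma$, then reparametrizing the parameter of $\Psi$ continuously from $n^\gamma$ to the value $s = (n^{1/d}/(2e^{-r}))^{1/\gamma}$ for which $1/s^\gamma$ matches the physical scale $2R$, one obtains the short-range bound
\begin{align*}
\sum_{i=1}^{\kpara{n^{\alpha/d}}} \PP(\xi_0 > u_n, \xi_i > u_n) = O\para{n^{-1-\epsilon/(d\gamma)}},
\end{align*}
so that multiplying by $n$ gives $O(n^{-\epsilon/(d\gamma)})$, which tends to $0$.

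For the remaining long-range indices $\kpara{n^{\alpha/d}} < i \leq \kpara{n/q}$ (a non-empty range for large $n$ since $\alpha < d$), I would reuse the spectral gap computation from Lemma \ref{lem:unD'}: each term is bounded by $\PP(\xi_0 > u_n)^2 + c_0 \lambda^i \PP(\xi_0 > u_n)$, and $\PP(\xi_0 > u_n) \asymp n^{-1}$ by Lemma \ref{lem:ScalSeq2}. Summing and multiplying by $n$ contributes $O(q^{-1}) + O(\lambda^{n^{\alpha/d}})$, which vanishes as $n \to \infty$ and then $q \to \infty$. The main obstacle will be the bookkeeping of three entangled scales: the hypothesis parameter $s$ must be tuned so that $1/s$ is comparable to the physical radius $R$ coming from the triangle-inequality reduction; the Hardy--Littlewood step must be applied at integer scales $s = n^\gamma$ yielding a summable Borel--Cantelli series; and the short-range cutoff of size $\asymp n^{\alpha/d}$ must lie below $n/q$ so the spectral gap regime covers all remaining indices. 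The constraint $\alpha < d$ provides precisely the slack that makes these three requirements simultaneously compatible.
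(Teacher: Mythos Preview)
Your proposal is correct and follows essentially the same approach as the paper: the same Hardy--Littlewood/Borel--Cantelli localization with $\gamma=(1+2\epsilon)/\kappa$ and $\beta=n^{-\epsilon}$, the same triangle-inequality reduction and rescaling to match the physical radius, and the same short-range/long-range split with the spectral gap handling the tail. The paper writes the short-range cutoff as $[(l-1)^{\gamma\alpha}]$ with $l=(n^{1/d}/(2e^{-r}))^{1/\gamma}$ rather than $[n^{\alpha/d}]$, but these are asymptotically identical.
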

In the following set $\Omega:=\supp(\mu)$. To complete the proof of Condition $D'(u_n)$ we need to show that the estimate in \eqref{eq:TorusEst} holds for the setup of Theorem \ref{thm:[Torus]}. This is the content of the next lemma.

\begin{lemma}[]
\label{lem:Est}
Assume that there exists $T>1$ such that $\norm{\omega}\leq T$ for all $\omega\in \Omega$. Assume also that $\det(\omega-I)\neq 0$ for all $\omega\in S_{\mu}$. Let $\alpha<d$. Then there exists $\kappa>0$ such that for all $s>0$
\begin{align}
\label{eq:T1}
\sum_{i=1}^{\kpara{s^{\alpha}}} \PP(E_i)=O\para{s^{-\kappa}}.
\end{align}
\begin{proof}


The strategy of the proof is to derive two different upper bounds on
\begin{align*}
\PP(E_i)&=\PP\left(\tub{(\ob,x)\in \Omega^{\times\N}\times \TX\,:\,\dd(L^i(\ob)x,x)<\frac1s}\right),
\end{align*}
using two different methods. One method generates a bound that is good for small values of $i$ while the other method gives a good bound for large values of $i$. Using the two in combination gives the upper bound in \eqref{eq:T1}.
%

\subsubsection{Method 1}
Let $ \ob\in\Omega^{\times \N}$ and for notational simplicity set
\begin{align*}
L^i( \ob):=\omega\in\Omega^i.
\end{align*}
For $s>0$ we look at
\begin{align*}
E_s^{\omega}:=\left\{x\in \TX:\omega x\in B_{\frac1s}(x)\right\}.
\end{align*}
A point $x\in \TX$ can be written as $x=y+\Z^d$ for some $y\in[0,1]^d$. Multiplication by $\omega$ gives
\begin{align*}
\omega x=\omega y+\Z^d.
\end{align*}
Assume that $x\in E_s^{\omega}$. Then
\begin{align}
\label{eq:yset}
\omega y\in y+B_{\frac1s}+\Z^d,
\end{align}
where $B_{\frac1s}$ is the ball of radius $\frac1s$ at 0 in $\R^d$. We see that 
\begin{align*}
m\para{\tub{x\in \TX:\omega x\in\bis}}=vol_{\R^d}\para{\tub{y\in[0,1]^d:\omega y\in y+B_{\frac1s}+\Z^d}}.
\end{align*}
Rearranging \eqref{eq:yset} we get
\begin{align*}
y\in (\omega-I)^{-1}B_{\frac1s}+(\omega-I)^{-1}\Z^d,
\end{align*}
where we used that $\det(\omega-I)\neq 0$. So 
\begin{align*}
m\para{\tub{x\in \TX:\omega x\in\bis}}=m\left(\left[(\omega-I)^{-1}B_{\frac1s}+(\omega-I)^{-1}\Z^d\right]\Big/\Z^d\right).
\end{align*}
We see that $(\omega-I)^{-1}\Z^d$ can at most have finitely many points in $[0,1]^d$ so the measure must be bounded from above by a scalar multiple of $m\para{(\omega-I)^{-1}B_{\frac1s}}$. To estimate the measure we first see that
\begin{align*}
m\para{(\omega-I)^{-1}B_{\frac1s}}&=\frac{1}{\num{\det(\omega-I)}}\;m\para{B_{\frac1s}}.
\end{align*}
Since $\det(\omega-I)\neq 0$ and $\omega$ has integer entries we see that $\num{\det(\omega-I)}\geq 1$. Then
\begin{align*}
m\para{(\omega-I)^{-1}B_{\frac1s}}\leq m\para{B_{\frac1s}}=O\para{s^{-d}}.
\end{align*}
To find an upper bound on the number of copies of $(\omega-I)^{-1}B_{\frac1s}$ in $[0,1]^d$, first notice that
\begin{align*}
(\omega-I)^{-1}=\frac{1}{\det(\omega-I)}\cdot A,
\end{align*}
where $A$ is some integer matrix. This implies that
\begin{align*}
(\omega-I)^{-1}\Z^d/\Z^d\subset\frac{1}{\det(\omega-I)}\Z^d/\Z^d.
\end{align*}
So the integer lattice $\Z^d$ will at most be contracted by the factor $\det(\omega-I)$ in all $d$ directions. This means that
\begin{align*}
\#\para{(\omega-I)^{-1}\Z^d/\Z^d}\leq \num{\det(\omega-I)}^d.
\end{align*}
By assumption $\norm{\omega}\leq T$ for all $\omega\in\Omega$. So for $\omega\in\Omega^i$ it follows simply by multiplying matrices that $\norm{\omega}\leq \para{dT}^i$. Set $\tilde{T}=dT$. By definition of the determinant we then see that $\det(\omega-I)\leq O(\tilde{T}^{di})$ for all $\omega\in  \Omega^i$.
Consequently,
\begin{align*}
\#\para{(\omega-I)^{-1}\Z^d/\Z^d}\leq O(\tilde{T}^{di})^d=O(\tilde{T}^{d^2i}).
\end{align*}
Multiplying the number of sets by the measure of each set we get
\begin{align*}
m\para{E_s^{\omega}}=m\left(\kpara{(\omega-I)^{-1}B_{\frac1s}+(\omega-I)^{-1}\Z^d}\Big/\Z^d\right)= O\para{\frac{\tilde{T}^{d^2i}}{s^d}}.
\end{align*}
Finally, as the upper bound is independent of $\omega=L^i(\ob)$, integrating over $\Omega^{\times \N}$ is trivial and so 
\begin{align*}
\PP(E_i)=\int_{\Omega^{\times \N}} m\left(\tub{x\in \TX\,:\,\dd(L^i( \ob)x,x)<\frac1s}\right) d\mu^{\otimes \N}( \ob)= O\para{\frac{\tilde{T}^{d^2i}}{s^d}}.
\end{align*}

%

\subsubsection{Method 2}

Let again $\ob\in\Omega^{\times \N}$ and $L^i( \ob)=\omega$. Again, for $s>0$ we look at the set 
\begin{align*}
E_s^{\omega}=\tub{x\in \TX\ssp\omega x\in\bis}.
\end{align*}
The idea of how to estimate its measure is to find a set, which contains $E_s^{\omega}$, and whose measure is easier to compute.
Think of $\TX$ as the $d$-cube $[0,1]^d$ and partition this into sub-cubes of the form
\begin{align*}
\bigtimes_{k=1}^d \kpara{\frac{j_k}{s},\frac{j_k+1}{s}},
\end{align*}
where $j_k\in\tub{0,\dots,s-1}$. Clearly, as each vector $\bar{j}=(j_1,\dots,j_d)\in\tub{0,\dots,s-1}^d:=J$ uniquely determines one such sub-cube, we have $s^d$ cubes of volume $\para{\frac{1}{s}}^d$ in the partition. Let $C_{\bar{j}}$ denote the cube corresponding to the vector $\bar{j}$. Clearly
\begin{align*}
\tub{x\in \TX\ssp\omega x\in\bis}=\bigcup_{\bar{j}\in J}\tub{x:x\in C_{\bar{j}},\,\omega x\in \bis}.
\end{align*}
Notice that this is a disjoint union up to measure zero. Let 
\begin{align*}
C_{\bar{j}}^+:=\bigtimes_{k=1}^d \kpara{\frac{j_k}{s}-\frac1s, \frac{j_k+1}{s}+\frac1s}.
\end{align*}
Obviously $C_{\bar{j}}\subset C_{\bar{j}}^+$. We claim that
\begin{align*}
\tub{x:x\in C_{\bar{j}}, \,\omega x\in \bis}\subset \tub{x: x\in C_{\bar{j}}, \,\omega x\in C_{\bar{j}}^+}.
\end{align*}
This is easy to see. Set $x=(x_1,\dots,x_d)$ and $\omega x=(y_1,\dots,y_d)$. If $\omega x\in \bis$, then
\begin{align*}
\dd(\omega x,x)=\sqrt{(x_1-y_1)^2+\cdots +(x_d-y_d)^2}<\frac1s.
\end{align*}
In particular this means that $|x_k-y_k|<\frac1s$ for all $k\in\tub{1,\dots,d}$. Assume further that $x\in C_{\bar{j}}$. Then for every $k$, $x_k\in\kpara{\frac{j_k}{s},\frac{j_k+1}{s}}$ so we must have $y_k\in\kpara{\frac{j_k}{s}-\frac1s, \frac{j_k+1}{s}+\frac1s}$ implying that $\omega x\in C_{\bar{j}}^+$. So we have
\begin{align*}
\tub{x\in \TX\ssp\omega x\in\bis}\subset \bigcup_{\bar{j}\in J}\tub{x:x\in C_{\bar{j}},\,\omega x\in C_{\bar{j}}^+}.
\end{align*}
Taking measures we get
\begin{align*}
m\para{\tub{x\in \TX\ssp\omega x\in\bis}}&\leq m\para{\bigcup_{\bar{j}\in J}\tub{x:x\in C_{\bar{j}},\,\omega x\in C_{\bar{j}}^+}}\\
&\leq\sum_{\bar{j}\in J}m\para{\tub{x:x\in C_{\bar{j}},\,\omega x\in C_{\bar{j}}^+}}\\
&=\sum_{\bar{j}\in J}\int_\TX \een_{\tub{x\ssp x\in C_{\bar{j}},\,\omega x\in C_{\bar{j}}^+}}(x)\,dm(x)\\
&=\sum_{\bar{j}\in J}\int_\TX \een_{C_{\bar{j}}}(x)\een_{C_{\bar{j}}^+}(\omega x)\,dm(x).
\end{align*}
Integrating over $\Omega^{\times \N}$ we get,
\begin{align*}
\int_{\Omega^{\times \N}}m\Big(\Big\{ x\in \TX: L^i( \ob) x\in & \bis\Big\}\Big)d\mu^{\otimes \N}( \ob)\\
&\leq \int_{  \Omega^{\times \N}}\sum_{\bar{j}\in J}\int_\TX \een_{C_{\bar{j}}}(x)\een_{C_{\bar{j}}^+}(L^i( \ob) x)dm(x)d\mu^{\otimes \N}( \ob)\\
&=\sum_{\bar{j}\in J}\int_\TX \een_{C_{\bar{j}}}(x)\int_{  \Omega^{\times \N}}\een_{C_{\bar{j}}^+}(L^i( \ob) x)d\mu^{\otimes \N}( \ob)dm(x)\\
&=\sum_{\bar{j}\in J}\int_\TX \een_{C_{\bar{j}}}(x)A^i\para{\een_{C_{\bar{j}}^+}(x)}dm(x),
\end{align*}
where $A$ is the averaging operator. Performing the analogous calculation as in \eqref{eq:DeriveLemD'} we get
\begin{align*}
&\left \vert \int_\TX \een_{C_{\bar{j}}}A^i\para{\een_{C_{\bar{j}}^+}}dm-\int_\TX \een_{C_{\bar{j}}}dm\int_\TX\para{\een_{C_{\bar{j}}^+}}dm \right \vert\\
&=\num{\int_\TX \een_{C_{\bar{j}}}\para{A^i\para{\een_{C_{\bar{j}}^+}}-\int_\TX\een_{C_{\bar{j}}^+}dm}dm}\\
&\leq\norm{\een_{C_{\bar{j}}}\para{A^i\para{\een_{C_{\bar{j}}^+}}-\int_\TX\een_{C_{\bar{j}}^+}dm}}_1\\
&\leq\norm{\een_{C_{\bar{j}}}}_2\norm{A^i\para{\een_{C_{\bar{j}}^+}}-\int_\TX\een_{C_{\bar{j}}^+}dm}_2\\
&\leq\big\Vert\een_{C_{\bar{j}}}\big\Vert_2 \big\Vert \een_{C_{\bar{j}}^+}\big\Vert_2 O(\lambda^i),
\end{align*}
where $\lambda\in(0,1)$. This gives us
\begin{align*}
\int_\TX \een_{C_{\bar{j}}}A^i\para{\een_{C_{\bar{j}}^+}}dm&\leq \int_\TX \een_{C_{\bar{j}}}dm\int_\TX\para{\een_{C_{\bar{j}}^+}}dm+\big\Vert\een_{C_{\bar{j}}}\big\Vert_2 \big\Vert \een_{C_{\bar{j}}^+}\big\Vert_2O(\lambda^i)\\
&=\para{\frac{3}{s^2}}^{d}+\para{\frac{3}{s^2}}^{\frac{d}{2}}O(\lambda^i).
\end{align*}
Now, recall that there were $s^d$ sub-cubes in the partition of $[0,1]^d$ so instead of summing over all $\bar{j}\in J$, we may multiply by $s^d$ to finally get
\begin{align*}
\PP(E_i)=\int_{\Omega^{\times \N}}m\para{\tub{x\in \TX\ssp L^i( \ob) x\in\bis}}d\mu^{\otimes \N}(\ob)
= O\para{\frac{1}{s^d}+\lambda^i}.
\end{align*}
%
\subsubsection{Combining method 1 and 2}

The idea of combining method 1 and method 2 is that for small values of $i$
\begin{align*}
O\para{\frac{\tilde{T}^{d^2i}}{s^d}},
\end{align*}
is relatively small as $s$ becomes large. Conversely, since $\lambda\in(0,1)$,
\begin{align*}
O\para{\frac{1}{s^d}+\lambda^i}
\end{align*}
is small for large values of $i$ as $s$ grows. Thus if we use the first bound for the first values of $i$ and add the second bound for the last values of $i$ we are optimizing the total upper bound. 

We can write the above idea as
\begin{align*}
\sum_{i=1}^{\kpara{s^{\alpha}}}\PP(E_i) &=\sum_{i=1}^{K}\PP(E_i)+\sum_{i=K+1}^{\kpara{s^{\alpha}}} \PP(E_i)\\
&\ll \sum_{i=1}^{K} \frac{\tilde{T}^{d^2i}}{s^d} + \sum_{i=K+1}^{\kpara{s^{\alpha}}}\para{\frac{1}{s^d}+\lambda^i}.
\end{align*}
for some $K\in\N$. Since $\tilde{T}>1$ we can estimate the first sum by
\begin{align*}
\sum_{i=1}^{K}\frac{\tilde{T}^{d^2i}}{s^d}= O\para{\frac{\tilde{T}^{d^2K}}{s^d}}.
\end{align*}
For the second sum we have
\begin{align*}
\sum_{i=K+1}^{\kpara{s^{\alpha}}} \para{\frac{1}{s^d}+\lambda^i} &\ll\sum_{i=1}^{\kpara{s^{\alpha}}}\para{\frac{1}{s^d}+\lambda^i} - \sum_{i=1}^{K}\para{\frac{1}{s^d}+\lambda^i}\\
&\ll \frac{1}{s^{d-\alpha}}+\lambda^K.
\end{align*}
Choose $K=\delta \log s$ where $\delta>0$ is some constant to be determined. Inserting this we get
\begin{align*}
\sum_{i=1}^{\kpara{s^{\alpha}}}\PP(E_i) &\ll \frac{1}{s^d}\tilde{T}^{d^2(\delta \log s)}+\frac{1}{s^{d-\alpha}}+\lambda^{\delta\log s}\\
&\ll s^{d^2(\delta \log \tilde{T})-d}+ s^{\delta\log \lambda}+s^{\alpha-d}.
\end{align*}
The estimate as a whole must be polynomially decreasing in $s$, so we need all exponents to be negative. This is true for $\alpha-d$ by assumption  and for $\delta\log\lambda$ since $\lambda\in (0,1)$. Also, by choosing $\delta>0$ sufficiently small we get that $d^2(\delta \log \tilde{T})-d<0$. Pick $\delta$ such that this inequality is satisfied and set $\kappa:=\min(|\alpha-d|,|\delta\log\lambda|,|d^2(\delta \log \tilde{T})-d|)$.
%
We then conclude that
\begin{align*}
\sum_{i=1}^{\kpara{s^{\alpha}}}\PP(E_i) = O\para{s^{-\kappa}}.
\end{align*}
\end{proof}
\end{lemma}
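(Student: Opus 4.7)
The plan is to bound $\PP(E_i)$ by two complementary methods, one tight for small $i$ and one tight for large $i$, and then optimize the breakpoint between them. The small-$i$ bound is purely linear-algebraic and uses the standing hypothesis $\det(\omega - I) \neq 0$ together with the integrality of $\omega \in \Omega^i$. The large-$i$ bound exploits the spectral gap of the averaging operator $A$ in the same way as in the proof of Lemma \ref{lem:unD'}.

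For the small-$i$ bound, fix $\omega = L^i(\ob) \in \Omega^i$ and analyze $E_s^\omega = \{x \in \TX : \omega x \in B_{1/s}(x)\}$ by lifting to the fundamental domain $[0,1]^d$. The condition $\omega x \in B_{1/s}(x)$ on the torus rewrites as $y \in (\omega - I)^{-1}B_{1/s} + (\omega - I)^{-1}\Z^d$ for a lift $y$. Each translate has Lebesgue measure $|\det(\omega - I)|^{-1} V_d s^{-d} \le V_d s^{-d}$, since $\omega - I$ is an integer matrix with $|\det(\omega - I)| \ge 1$, while the number of translates meeting the unit cube is at most $|\det(\omega - I)|^d$, as one sees by writing $(\omega - I)^{-1} = \det(\omega - I)^{-1} A$ with $A$ an integer matrix. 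Combining this with the elementary norm bound $\|\omega\| \le (dT)^i$, hence $|\det(\omega - I)| \ll \tilde{T}^{di}$ with $\tilde{T} = dT$, yields $m(E_s^\omega) = O(\tilde{T}^{d^2 i}/s^d)$. Since this bound is uniform in $\ob$, integration gives the same bound for $\PP(E_i)$.

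For the large-$i$ bound, partition $[0,1]^d$ into $s^d$ axis-parallel cubes $C_{\bar{j}}$ of sidelength $1/s$ and enlarge each to $C_{\bar{j}}^+$ of sidelength $3/s$. By the triangle inequality, if $x \in C_{\bar{j}}$ and $\omega x \in B_{1/s}(x)$ then $\omega x \in C_{\bar{j}}^+$, so summing indicator functions and integrating against $\mu^{\otimes \N}$ bounds $\PP(E_i)$ by $\sum_{\bar{j}} \int_\TX \een_{C_{\bar{j}}} A^i(\een_{C_{\bar{j}}^+}) \, dm$. The Cauchy-Schwartz-with-spectral-gap calculation from the proof of Lemma \ref{lem:unD'} then splits each term into a main part $m(C_{\bar{j}}) m(C_{\bar{j}}^+)$ plus an error of order $\lambda^i \|\een_{C_{\bar{j}}}\|_2 \|\een_{C_{\bar{j}}^+}\|_2$. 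Summing over the $s^d$ cubes produces $\PP(E_i) = O(s^{-d} + \lambda^i)$.

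To combine, set $K = \delta \log s$ with $\delta > 0$ small, use Method 1 on $i \le K$ and Method 2 on $K < i \le \kpara{s^\alpha}$. The geometric sum $\sum_{i \le K} \tilde{T}^{d^2 i}/s^d$ is $O(s^{d^2 \delta \log \tilde{T} - d})$, and the tail contributes $O(s^{\alpha - d} + s^{\delta \log \lambda})$. Choosing $\delta$ small enough that $d^2 \delta \log \tilde{T} < d$ makes the first exponent negative; the other two are negative by the hypothesis $\alpha < d$ and by $\lambda \in (0,1)$. Taking $\kappa$ to be the minimum of the three absolute values finishes the proof. The main delicacy is in Method 1, where one must invoke integrality of $\omega$ twice -- once to ensure $|\det(\omega - I)| \ge 1$ and once to control the lattice $(\omega - I)^{-1}\Z^d / \Z^d$ -- so that the gain $s^{-d}$ can absorb the exponentially growing factor $\tilde{T}^{d^2 i}$ before the spectral gap takes over in Method 2.
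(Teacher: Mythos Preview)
Your proposal is correct and follows essentially the same approach as the paper: the same two-method decomposition (linear-algebraic bound via $(\omega-I)^{-1}$ for small $i$, cube partition plus spectral gap for large $i$), the same breakpoint $K=\delta\log s$, and the same final choice of $\kappa$ as the minimum of the three exponents. Your closing remark about the double use of integrality in Method~1 captures precisely the delicate point.
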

We can now conclude on the proof of Theorem \ref{thm:[Torus]}. In Lemma \ref{lem:ScalSeq2} we proved that the correct scaling sequence was $u_n=r+\frac1d\log n$. Lemma \ref{lem:Est} and \ref{lem:D'} together prove that Condition $D'(u_n)$ is satisfied under the assumptions of Theorem \ref{thm:[Torus]}. Condition $D(u_n)$ was proven already in Lemma \ref{lem:D(u_n)}. This means that all assumptions of Theorem \ref{thm:341orig} have been satisfied and so Theorem \ref{thm:[Torus]} follows.

\begin{proof}[Proof of Corollary \ref{cor:[Torus]}]
We want to prove the logarithm law without the assumptions on $\Omega$ and $S_{\mu}$ made in Lemma \ref{lem:Est} hence we cannot apply Theorem \ref{thm:[Torus]} directly. However, the proof of Lemma \ref{lem:unD'} works for the random walk on the torus as well. In this case the role of the $k$-DL assumption is played by the fact that $\PP(\xi_0>u_n)=\frac1n V_d e^{-dr}$ which we derived in the proof of Lemma \ref{lem:ScalSeq2}.

The analogue of Lemma \ref{lem:unD'} for closest returns on the torus implies that the conclusion of Theorem \ref{thm:Res1}, Corollary \ref{cor:aRes} and Corollary \ref{cor:Res3} holds for closest returns on the torus. In particular, Corollary \ref{cor:Res3} then implies Corollary \ref{cor:[Torus]}.     

\end{proof}

\bibliographystyle{maximbib}
\bibliography{BibSpec3}

\end{document}